\documentclass[11pt]{article}
\usepackage{graphicx}
\usepackage{mathrsfs}
\usepackage{amsmath}
\usepackage{color}
\usepackage{bbm}
\usepackage{amsmath,amsthm,amssymb,amscd}
\usepackage{latexsym}
\usepackage{hyperref}
\usepackage[numbers,sort&compress]{natbib}
\usepackage{hypernat}

\newtheorem{theorem}{Theorem}[section]
\newtheorem{corollary}[theorem]{Corollary}
\newtheorem{lemma}[theorem]{Lemma}
\newtheorem{proposition}[theorem]{Proposition}

\theoremstyle{definition} \theoremstyle{remark}
\numberwithin{equation}{section}

\begin{document}\title{{\bf Dominance of slow solutions for second order abstract evolution equations with time-varying damping
}\thanks{The work was supported partly by the NSF of China (12171094), and the Shanghai Key
Laboratory for Contemporary Applied Mathematics (08DZ2271900).}}
\author{Jun-Ren Luo$^{a}$,  Ti-Jun Xiao$^{b}$ \thanks{Corresponding author. \ E-mail: \ tjxiao@fudan.edu.cn}
\\{\small $^a$  College of Science, University of Shanghai for Science and Technology, Shanghai 200093, China}\\
{\small $^b$  Shanghai Key
Laboratory for Contemporary Applied Mathematics}\\ {\small School of Mathematical Sciences, Fudan University, Shanghai 200433, China}}

\date{}
\maketitle

\begin{minipage}[2cm]{14cm}

\centerline {\bf Abstract}

   Of concern is a class of non-autonomous evolution equations of second order in Hilbert spaces, with a nonnegative self-adjoint operator $A$, time-varying damping and nonlinear source term.
 We give an upper decay rate of the energy, valid for all solutions and solely based on the damping coefficient and the geometrical index of the source term.
 Furthermore, we prove under suitable conditions that for all initial data, except for those in the kernel of $A$,
 the solutions decay (in the energy norm) at most as fast as this decay rate.
   The result not only shows the optimality of the decay rate, but also reveals an unusual phenomenon: ``slow solutions",
   i.e. those that decay at {\it exactly} this rate, are dominant in amount. Moreover, specialized to the case when the nonlinear source is absent, our result improves relevant existing ones to a large extent.

\vspace{0.4cm}

\noindent {\bf Keywords:}\quad Evolution equation; energy; optimal decay rate; dissipative system;
non-autonomous; time-varying damping; nonlinear source.

\vspace{0.24cm}

2020 AMS Subject Classification: 47J35; 47E05; 47F05; 34G20; 34D20; 35L90; 35B40.

\end{minipage}

\section{Introduction}

Let $A$ be a nonnegative self-adjoint linear operator on a Hilbert space $\mathcal{H}$, and let the nonlinear operator $\nabla F: D\left(A^{1 / 2}\right) \rightarrow \mathcal{H}$ be the gradient of a scalar-valued function $F(u)$ having a polynomial growth comparable to $\|u\|^{s}$ for some $s>2$.
We consider the following abstract hyperbolic equation with initial data:
\begin{eqnarray}\label{1.1}
\left\{\begin{array}{ll}
u^{\prime \prime}(t)+A u(t)+\gamma(t) u^{\prime}(t)+\nabla F(u(t))=0, \quad \forall t \geq t_0>0, \\[0.36cm]
u(t_0)=u_{0},~ u^{\prime}(t_0)=u_{1},
\end{array} \right.
\end{eqnarray}
where the damping coefficient takes the form
\begin{equation}\label{gamma}
\gamma(t)=\frac{\alpha}{t},\quad \mbox{with some}~\alpha>0.
\end{equation}
The abstract model includes, as special cases, various concrete PDEs and integrodifferential systems of constant or variable coefficients.

Letting $F=0$ in the equation of \eqref{1.1} yields the linear equation $$u^{\prime \prime}(t)+A u(t)+\gamma(t) u^{\prime}(t)=0.$$ For this equation with $A=-\Delta$ in $L^2(R^n)$, Matsumura \cite{MAT}(see also \cite{USE}) obtained
\begin{equation}\label{Emu}
E(t)=O(t^{-\mu}), \,\,~ \mu:=\min\{2,\alpha\},
\end{equation}
by the aid of weighted energy inequalities. Later, Wirth \cite{WIR} gave an alternative proof of such estimates using an explicit representation of the solution operator by special functions, illustrating why this change in the decay order for $\mu = 2$ appears.
It is shown by Hirosawa and Nakazawa \cite{HIR} that for $\alpha>2$,
$$
\lim _{t \rightarrow+\infty}(1+t)^{2}E(t)=0.
$$
For $\alpha<1,$  Wirth \cite{WIR1} furthermore showed from a modified scattering theory that all nonzero solutions decay with the rate $t^{-\alpha}$. Recently,
assuming that $A$ is unitary equivalent to a non-negative multiplication operator in some $L^2$ space, and $m/t\le\gamma(t)\le M/t$ (with positive constants $M,m$),
Ghisi and Gobbino \cite{GHI0} proved \eqref{Emu} by Fourier analysis, and pointed out that if $A$ is coercive, then all solutions decay at least as fast as the rate $t^{-\alpha}$, even if $\alpha>2$. See also \cite{GHIG} and the references therein for related researches.

Letting $A=0$ in the equation of \eqref{1.1} gives
\begin{equation}\label{1.1-0}
  u^{\prime \prime}(t)+\gamma(t) u^{\prime}(t)+\nabla F(u(t))=0.
\end{equation}
For this equation with $F$ being convex,
  we know from \cite{AUJ} (see also \cite{LUO4}) that for \eqref{1.1-0}, the energy $E(t)$  satisfies
\begin{equation}\label{nA}
            E(t)=   \left\{
            \begin{array}{lll}
             O\left(t^{-\frac{2s}{s-2}}\right),  &{\rm if}~~ \alpha\ge\frac{s+2}{s-2},\\[0.28cm]
           O\left(t^{-\frac{2\alpha s}{s+2}}\right), &{\rm if}~~0<\alpha\le\frac{s+2}{s-2},
           \end{array}
        \right.
\end{equation}
and the decay rates are optimal. It is worth noting that $\frac{s+2}{s-2}$ is a {\it critical value} of $\alpha.$

On the other hand, for the full equation \eqref{1.1} in the presence of both operators $A$ and $F$, Alvarez and Attouch in \cite{ALVA} obtained the energy decay rate $t^{-1}$ when $\gamma\equiv\alpha$ and $F$ is convex; see also \cite{ATT0, CAB}. If $F(u)$ is comparable to $\|u\|^{s}$ ($s>2$), Ghisi, Gobbino, and Haraux \cite{GHI} obtained, in the case of $\gamma\equiv 1$, the optimal decay rate $t^{-s/(s-2)}$ (see also \cite{GHI1}). In the case of $\gamma(t)=\alpha / t^{\theta}$ (with some $\theta \in(0,1)$ and $\alpha>0$), the optimal energy decay rate $t^{-\frac{(1+\theta)s}{s-2}}$ was given in \cite{LUO2};
see also \cite{BAL, CAB, MAY1} about decay rate estimates, for $F$ being convex and $\gamma(t)\sim\alpha / t^{\theta}$ (with $\theta,\alpha$ as above).

The problem of optimal decay rate for the limit case \eqref{gamma} turns out to be quite hard to study, especially if one attempts to
find an abundant of {\it slow solutions}, i.e. the solutions that decay (in the energy norm) {\it exactly} at the optimal rate (up to multiplicative constants).
The rate may be sensitive to changes of the values of $\alpha$ and the geometrical index $s$ of $F$, and so particularly refined analysis will be needed.

Recently, for \eqref {1.1} we obtained in \cite{LUO3} the energy decay rate
  \begin{equation}\label{nA1}
            E(t)=   \left\{
            \begin{array}{lll}
             O\left(t^{-\frac{2s}{s-2}}\right),  &{\rm if}~~ \alpha>\frac{2s}{s-2},\\[0.28cm]
           O\left(t^{-\frac{2s}{s-2}}\ln t\right), &{\rm if}~~\alpha=\frac{2s}{s-2},\\[0.28cm]
          O\left(t^{-\alpha}\right), &{\rm if}~~\frac{s+2}{s-2}\le\alpha<\frac{2s}{s-2}.
           \end{array}
        \right.
\end{equation}
 Moreover, we showed the optimality of the rate
  when $\alpha>\frac{2s}{s-2}$,
and left the problem open whether the energy has the upper decay rate $t^{-\frac{2s}{s-2}}$, also for
 $\alpha \in \left[\frac{s+2}{s-2},~\frac{2s}{s-2}\right]$ as in \eqref{nA}.

We first investigate the case of  $A$ being coercive (i.e. strongly positive). For concrete hyperbolic equations with coercive operators governing the linear parts, energy decay problems have been widely researched (cf., e.g.,  \cite{Abbas2015,Abbas20151,Akil,Batty2019,DAO,FAV,GOL,MAR,NAK,N2,XIA}). Yet very few studies have addressed the {\it optimality} of decay rates under time-dependent damping. All these papers mentioned here and above stimulate this work. Actually, we are able to prove that, for {\it any} $\alpha>0$, the energy decays at least as fast as $t^{-\alpha}$, and if the initial data $(u_0,u_1)\in \mathcal{H}_{1} \times \mathcal{H}\setminus\{(0,0)\}$, the energy decays at most as fast as $t^{-\alpha}$. This says that the energy of all solutions, except for one, decays at {\it exactly} the optimal rate $t^{-\alpha}$, indicating the absolute dominance of the slow solutions in amount.

Second, we study the general case when $A$  may not be coercive (so may have a nontrivial kernel like the negative Neumann Laplacian).  The optimality result about coercive $A$ in fact gives a negative answer to the aforementioned open problem posed in \cite{LUO3}(except for the case $\alpha=\frac{2s}{s-2}$). It tells us that for the general case, $O\left(t^{-\alpha}\right)$ is the best possible upper energy estimate for any $\alpha>0$, which is in contrast to \eqref{nA} (giving the optimal rate for the case $A=0$) when $\alpha<\frac{2s}{s-2}$. Therefore, the rate $t^{-\alpha}$ is optimal for $\alpha \in \left[\frac{s+2}{s-2},~\frac{2s}{s-2}\right)$ by \eqref{nA1}, and we see
 that $\frac{2s}{s-2}$ is another {\it critical value} of $\alpha$.

When the nonzero operator $A$ is not coercive and $\alpha \in\left[\frac{s+2}{s-2},~\frac{2s}{s-2}\right]$, is there still an abundant of {\it slow solutions} for which the energies decay exactly at the rate $t^{-\alpha}$? This issue is particularly challenging. Indeed, to obtain sharp lower energy estimation for $\alpha \le \frac{2s}{s-2}$ is harder than for $\alpha >\frac{2s}{s-2}$ considered in \cite{LUO3}. We finally succeed in demonstrating the dominance  of those slow solutions, for $\alpha\in\left[\frac{s+2}{s-2},~\frac{2s}{s-2}\right]$. Specifically, any initial data $(u_0,u_1)\in \mathcal{H}_{1} \times \mathcal{H}\setminus (\ker A\times\ker A)$ produce slow solutions. This is completely different from the case of $\gamma(\cdot)\equiv 1$ (considered in \cite{HAR,GHI}) or the case of $\alpha >\frac{2s}{s-2}$  (in \cite{LUO3}), where the initial data for slow solutions were found close to $\ker A$.

As a byproduct (Corollary \ref{cor}), for the linear equation \eqref{1.1-0} with $\gamma(t)= \alpha/t$, our result improves the existing related ones to a large extent.

It is worth noting that there seems to be little literature on exhibiting as many slow solutions as possible to various equations.
We here mention \cite{WIR1}(about the linear equation \eqref{1.1-0} with $A=-\Delta$ in $L^2(R^n)$ and $\gamma(t)$ decaying slower than $1/t$) where all nonzero solutions were shown to be slow solutions. Also in \cite{GHI} (for $\gamma\equiv 1$) the authors proved
the existence of a nonempty open set of initial data producing slow solutions.

In order to prove our results, we will  exploit flexibly the idea of asymptotic rate-sharpening and construct appropriate auxiliary functionals. The functionals are operator versions of those for ODEs originating from \cite{SU} (see also \cite{ATT5, AUJ, SEB} and the references therein). For showing the existence of slow solutions, we will establish favorable relationships between the initial data and coefficients; this is a key step. In the case of $A$ not being coercive, we will analyze the behaviors of the solutions by studying their range components in the space $(\ker A)^{\bot}$.
As will be seen, the optimal energy decay rate
is determined by the range component of the solution (rather than the kernel component), contrary to the case of $\alpha >\frac{2s}{s-2}$ considered in \cite{LUO3}.

By the way, in addition to their own importance, our results may also have reference value for studying the properties of the corresponding optimization algorithms. It is known that optimization algorithms (in particular, Nesterov's accelerated schemes) are associated with some second order damped ODEs (with time-dependent damping) (cf., e.g., \cite{SU}). There have been a lot of developments in the study of the asymptotic behaviors and decay rates for this class of continuous-time dynamical systems, aiming to provide insightful frameworks for the study of the properties of
optimization algorithms (cf., e.g., \cite{ATT1, ATT5,ATT6, AUJ, LAS1, SEB} and the references therein). From this perspective, the model \eqref{1.1} is related to the corresponding optimization algorithm for
the energy minimization problem:
$$\min\left\{\frac{1}{2}\|A^{1/2}u\|^2+F(u):~{u\in D(A^{1 / 2})}\right\}.$$
The prototype of the objective function in the problem above
is the Dirichlet-type energy $\frac{1}{2}\|\nabla u\|^2+F(u).$

The rest of this paper is organized as follows. In Section 2, we state basic assumptions on $F$, give a wellposedness result, and define two auxiliary functionals. Section 3 concerns \eqref{1.1} with coercive operator $A$, both upper and lower energy decay estimates being obtained. Section 4 is devoted to the case when $A$ is not coercive, and we present two theorems about lower energy decay estimates.  Moreover, in the last section, we give the proofs of some technical auxiliary lemmas which are used in Section 4.

\section{Basic assumptions and wellposedness}

Throughout this paper, $\mathcal{H}$ stands for a real Hilbert space, whose scalar product and norm are respectively denoted by $\langle\cdot\rangle$ and $||\cdot||$. And we define
 \begin{equation*}
   \mathcal{H}_1=D(A^{1/2})\quad \mbox{with norm}~~ \|v\|_{\mathcal{H}_1}:=\left(\|v\|^{2}+\|A^{1/2}v\|^{2}\right)^{1/2}.
 \end{equation*}

First we state the basic assumption on $F$.

\textbf{Assumption (H1)}

(1) $F:~\mathcal{H}_1\rightarrow R$ is nonnegative and differentiable with $F(0)=0$.

(2) The gradient $\nabla F:~ \mathcal{H}_1\rightarrow \mathcal{H}$ is locally Lipschitz continuous, i.e.
\begin{align*}
\|\nabla F(v)-\nabla F(w)\|\leq L_0(\|v\|_{\mathcal{H}_1},~\|w\|_{\mathcal{H}_1})\|v-w\|_{\mathcal{H}_1}, \quad \forall \  v,w \in \mathcal{H}_1,
\end{align*}
for some positive function $L_0$ on $R^+\times R^+$, which is bounded on bounded sets.

A function $u\in C([0,+\infty);\mathcal{H}_1)\cap C^{1}([0,+\infty);\mathcal{H})$ is called a {\it mild solution} of problem (\ref{1.1}), if it satisfies the integral equation
\begin{equation*}
  u(t)=S'(t)u_0+S(t)u_1-\int_0^tS(t-\tau)[\gamma(\tau) u'(\tau)+\nabla F(u(\tau))]d\tau,\quad t\ge 0.
\end{equation*}
Here $S(\cdot):~ [0,+\infty)\rightarrow \mathcal{L}(\mathcal{H})$ ~(the space of bounded linear operators on $\mathcal{H}$)~is a solution operator for the linear equation

\ \vspace{-10pt}
\begin{equation*}
  u''(t)+Au(t)=0, \quad t\ge 0,
\end{equation*}
with $S(0)=0$ and $S'(0)=I$ (the identity; the derivative being in the sense of strong topology).

In particular, $u$ is called a {\it strong solution} if $u\in C^1([0,+\infty);\mathcal{H}_1)\cap C^{2}([0,+\infty);\mathcal{H})$ and (\ref{1.1}) holds.

The following is a wellposedness result (cf. \cite{LUO} for a similar proof).

\begin{proposition}\label{pro}
Assume $A$ is a self-adjoint nonnegative operator on $\mathcal{H}$ with dense domain $D(A)$. Suppose that Assumption (H1) and \eqref{gamma} are satisfied.
Then, for every $(u_{0},u_{1})\in \mathcal{H}_1\times \mathcal{H}$, problem (\ref{1.1}) admits a unique global mild solution
\begin{align*}
u\in C([0,+\infty);\mathcal{H}_1)\cap C^{1}([0,+\infty);\mathcal{H}),
\end{align*}
which depends continuously on the initial data. In particular, $u$ is a strong solution if $(u_{0},u_{1})\in D(A)\times \mathcal{H}_1.$
Moreover, defining the energy
\begin{align*}
E(t):=\frac{1}{2}\|u'(t)\|^{2}+\frac{1}{2}\|A^{1/2}u(t)\|^{2}+F(u(t)),
\end{align*}
for strong solutions, one has
\begin{align*}
\frac{dE(t)}{dt}=-\gamma(t)\|u'(t)\|^2.
\end{align*}

\end{proposition}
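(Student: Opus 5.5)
The plan is to recast \eqref{1.1} as a first order Cauchy problem on the energy space $\mathcal{X}:=\mathcal{H}_1\times\mathcal{H}$, then obtain a local solution by a fixed point argument and extend it globally by an a priori energy bound. With $U=(u,v)^{T}$, we write
\begin{equation*}
U'(t)=\mathcal{A}U(t)+\mathcal{G}(t,U(t)),\qquad \mathcal{A}=\begin{pmatrix}0&I\\-A&0\end{pmatrix},\qquad \mathcal{G}(t,U)=\begin{pmatrix}0\\-\gamma(t)v-\nabla F(u)\end{pmatrix}.
\end{equation*}
Because $A$ is self-adjoint and nonnegative, $\mathcal{A}$ (with domain $D(A)\times\mathcal{H}_1$) generates a $C_0$-group on $\mathcal{X}$. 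We equip $\mathcal{X}$ with the equivalent norm $\|u\|^2+\|A^{1/2}u\|^2+\|v\|^2$; the group then grows at most like $e^{c|t|}$. Its first row is $(S'(t),S(t))$, so mild solutions of the first order problem are exactly the mild solutions defined above. Time should be measured from $t_0>0$, where $\gamma(t)=\alpha/t$ is smooth and bounded on $[t_0,\infty)$. On bounded sets of $[t_0,\infty)\times\mathcal{X}$, $\mathcal{G}$ is continuous in $t$ and locally Lipschitz in $U$, by (H1)(2). The contraction mapping principle on $C([t_0,t_0+\delta];\mathcal{X})$ (a standard Pazy-type argument, cf. \cite{LUO}) then gives a unique local mild solution. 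It also gives continuous dependence on the data on compact intervals, via Gronwall's inequality, together with the blow-up alternative: if $T_{\max}<\infty$ then $\|U(t)\|_{\mathcal{X}}\to\infty$.

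For regular data $(u_0,u_1)\in D(A)\times\mathcal{H}_1=D(\mathcal{A})$, we want a classical solution. The available regularity is weak: $\nabla F$ is only locally Lipschitz from $\mathcal{H}_1$ into $\mathcal{H}$, so $\mathcal{G}$ is merely locally Lipschitz in $U$ and $C^1$ in $t$. I would invoke the standard result that for Lipschitz perturbations into a reflexive space (here the perturbation only acts in the second component, which lies in $\mathcal{H}$) the mild solution is Lipschitz in time, hence differentiable a.e. It then stays in $D(\mathcal{A})$ and is a strong solution. Alternatively, one can difference-quotient the Duhamel formula and apply Gronwall. I would then take the inner product of the equation with $u'$, which is justified for strong solutions. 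Since $F$ is differentiable with $\frac{d}{dt}F(u(t))=\langle\nabla F(u),u'\rangle$, this yields
\begin{equation*}
\frac{dE(t)}{dt}=-\gamma(t)\|u'(t)\|^2 .
\end{equation*}
Density of $D(A)\times\mathcal{H}_1$ and continuous dependence then give the integrated inequality $E(t)\le E(t_0)$ for every mild solution.

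Global existence follows from the energy bound. Since $F\ge0$ and $E$ is nonincreasing, $\|u'\|^2+\|A^{1/2}u\|^2\le 2E(t_0)$. Moreover $\|u(t)\|\le\|u_0\|+\int_{t_0}^t\|u'\|\le \|u_0\|+(t-t_0)\sqrt{2E(t_0)}$. Hence $\|U(t)\|_{\mathcal{X}}$ stays bounded on bounded intervals, and the blow-up alternative gives $T_{\max}=\infty$.

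The main obstacle is the regularity step: showing that strong data give strong solutions, which is needed for the energy identity, when the nonlinearity is only locally Lipschitz $\mathcal{H}_1\to\mathcal{H}$. I would handle it by the time-Lipschitz argument for $t\mapsto U(t+h)-U(t)$ using Gronwall. From $U(t_0)\in D(\mathcal{A})$ we have $\|U(t+h)-U(t)\|_{\mathcal{X}}\le Ch$. This makes $t\mapsto\mathcal{G}(t,U(t))$ Lipschitz into $\mathcal{X}$, and the standard inhomogeneous Cauchy problem theory for Lipschitz forcing in a Hilbert space then yields $U\in C^1([t_0,\infty);\mathcal{X})\cap C([t_0,\infty);D(\mathcal{A}))$.
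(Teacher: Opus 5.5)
Your proposal is correct. The paper gives no proof here beyond pointing to \cite{LUO}, and your route (first-order reformulation on $\mathcal{H}_1\times\mathcal{H}$, local contraction with blow-up alternative, Gronwall time-Lipschitz regularity for data in $D(A)\times\mathcal{H}_1$ giving the energy identity, then the a priori bound $\|u'\|^2+\|A^{1/2}u\|^2\le 2E(t_0)$ for global existence) is the standard semigroup argument that citation stands for; your reading that time must run from $t_0>0$ rather than $0$, since $\gamma(t)=\alpha/t$ is singular at $0$, is also the right interpretation.
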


The following assumption about $F$ is needed to study the decay rate of the energy.

\textbf{Assumption (H2)}

(1) There exists a constant $s>2$ such that, for $u\in \mathcal{H}_1,$
$$
sF(u)\leq \langle \nabla F(u), u\rangle,
$$

(2) There exists some positive constant $L_1$, such that $\forall v, w\in \mathcal{H}_1$,
$$
\|\nabla F(v)-\nabla F(w)\|\leq L_1\|v-w\|_{\mathcal{H}_1}\left(\|v\|^{s-2}_{\mathcal{H}_1}+\|w\|^{s-2}_{\mathcal{H}_1}\right).
$$

(H2)(1) is a flatness geometrical condition on $F$ in \cite{AUJ,CAB1,SEB,SU} for the case of $A=0$.

Let $\lambda>0$ be a constant and $\xi=\lambda(\lambda+1-\alpha)$. We define the following auxiliary energy:
$$
E_{\lambda}(t)=t^{2}F(u(t))+\frac{1}{2}\left\|\lambda u(t)+t u'(t)\right\|^{2}+\frac{\xi}{2}\left\|u(t)\right\|^{2}+\frac{t^2}{2}\|A^{1/2}u(t)\|^2.
$$
Write
\begin{align*}
a_u(t)&=tF(u(t)),\quad b_u(t)=\frac{1}{2 t}\left\|\lambda u(t)+t u'(t)\right\|^{2},\\
c_u(t)&=\frac{1}{2 t}\left\|u(t)\right\|^{2}, \quad d_u(t)=\frac{t}{2}\|A^{1/2}u(t)\|^2.
\end{align*}
Then, we have
$$E_{\lambda}(t)=t(a_u(t)+b_u(t)+\xi c_u(t)+d_u(t)).$$
Also, we define $H(t)=t^{\eta}E_{\lambda}(t)$, where $\eta$ is a constant.

\section{\eqref{1.1} with coercive operator $A$}

We consider equation \eqref{1.1} where $A$ is coercive, i.e.,
\begin{equation}\label{nu}
\exists \nu>0, \text { such that }\|A^{1 / 2} u\|^{2} \geq \nu\|u\|^{2}, \quad \forall u \in \mathcal{H}_1.
\end{equation}

First, we give an equality of $H'(t)$, whose proof is included in the proof process of \cite[Lemma 3.1]{LUO3}.
\begin{lemma}\label{lem1}
Let Assumption (H1) hold and $u:\left[t_{0},+\infty\right) \rightarrow H$ be a solution of \eqref{1.1}.
Then
\begin{equation}\label{H'}
\begin{aligned}
H'(t)=& t^{\eta}[(2+\eta)tF(u(t))-\lambda t\langle\nabla F(u(t)), u(t)\rangle+(2+\eta+ 2\lambda-2 \alpha) b_u(t)\\
&+\lambda(\lambda+1-\alpha)(\eta-2\lambda) c_u(t)+(2+\eta-2\lambda) d_u(t)].
\end{aligned}
\end{equation}
\end{lemma}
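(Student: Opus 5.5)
The plan is a direct differentiation of $H(t)=t^{\eta}E_\lambda(t)$. I will first do the computation for strong solutions, i.e. $(u_0,u_1)\in D(A)\times\mathcal{H}_1$, so that every term of $E_\lambda$ is $C^1$ and the equation holds pointwise. By the product rule, $H'(t)=\eta t^{\eta-1}E_\lambda(t)+t^{\eta}E_\lambda'(t)$. Since $E_\lambda=t(a_u+b_u+\xi c_u+d_u)$, the first term contributes exactly $\eta t^{\eta}(a_u+b_u+\xi c_u+d_u)$. This already explains the appearance of $\eta$ in each coefficient of \eqref{H'}, so what remains is to show
$$
E_\lambda'(t)=2tF-\lambda t\langle\nabla F(u),u\rangle+(2+2\lambda-2\alpha)b_u-2\lambda\xi c_u+(2-2\lambda)d_u .
$$

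I will differentiate $E_\lambda$ term by term.
\begin{itemize}
\item[(i)] $\frac{d}{dt}[t^2F(u)]=2tF(u)+t^2\langle\nabla F(u),u'\rangle$.
\item[(ii)] $\frac{d}{dt}\big[\tfrac{t^2}{2}\|A^{1/2}u\|^2\big]=t\|A^{1/2}u\|^2+t^2\langle Au,u'\rangle$.
\item[(iii)] $\frac{d}{dt}\big[\tfrac{\xi}{2}\|u\|^2\big]=\xi\langle u,u'\rangle$.
\item[(iv)] Writing $w=\lambda u+tu'$, we have $w'=(\lambda+1)u'+tu''$. Using the equation, $tu''=-tAu-\alpha u'-t\nabla F(u)$, so $w'=(\lambda+1-\alpha)u'-tAu-t\nabla F(u)$. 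Then $\langle w,w'\rangle$ expands into terms in $\langle u,u'\rangle$, $\|u'\|^2$, $\langle Au,u\rangle$, $\langle Au,u'\rangle$, $\langle\nabla F(u),u\rangle$ and $\langle\nabla F(u),u'\rangle$.
\end{itemize}

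Adding these, the terms $t^2\langle\nabla F(u),u'\rangle$ and $t^2\langle Au,u'\rangle$ cancel against their counterparts from $\langle w,w'\rangle$. One also gets $-\lambda t\|A^{1/2}u\|^2$ from $\langle\lambda u,-tAu\rangle$. Combined with $t\|A^{1/2}u\|^2$ from (ii), this gives $(2-2\lambda)d_u$.

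The key step is to re-express the remaining terms $\lambda(\lambda+1-\alpha)\langle u,u'\rangle+t(\lambda+1-\alpha)\|u'\|^2+\xi\langle u,u'\rangle$ through $b_u$ and $c_u$. For this I use the identity
$$
t\|u'\|^2=2b_u-2\lambda\langle u,u'\rangle-\lambda^2\cdot 2c_u,
$$
which comes from expanding $\|\lambda u+tu'\|^2=2tb_u$ and $\|u\|^2=2tc_u$. Substituting, and recalling $\xi=\lambda(\lambda+1-\alpha)$, the coefficient of $\langle u,u'\rangle$ becomes $\xi-2\lambda(\lambda+1-\alpha)+\xi=0$, so these terms vanish. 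What is left is $2(\lambda+1-\alpha)b_u-2\lambda^2(\lambda+1-\alpha)c_u=(2+2\lambda-2\alpha)b_u-2\lambda\xi c_u$.

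Adding the $\eta$-contributions then gives precisely \eqref{H'}. In particular, the $c_u$ coefficient is $\eta\xi-2\lambda\xi=\lambda(\lambda+1-\alpha)(\eta-2\lambda)$.

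The main (mild) obstacle is the justification for mild solutions, because then $u''$ and $Au$ need not exist. I handle it with Proposition \ref{pro}. I approximate the data by $(u_0^n,u_1^n)\in D(A)\times\mathcal{H}_1$ and note that the corresponding strong solutions converge in $C([t_0,T];\mathcal{H}_1)\cap C^1([t_0,T];\mathcal{H})$. I integrate the identity on $[t_0,t]$ and pass to the limit. Every term on the right of \eqref{H'} involves only $u$, $u'$, $A^{1/2}u$ and $F$, $\nabla F$; these converge by local Lipschitz continuity (H1). So the integrated identity holds for mild solutions, and hence \eqref{H'} holds, since its right side is continuous in $t$.
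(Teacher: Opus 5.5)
Your proposal is correct and follows the same route as the paper. The paper defers this lemma to \cite{LUO3}, but it carries out the identical computation for $v$ in Lemma \ref{lem0}: the product rule on $t^{\eta}E_\lambda$, substitution of the equation into $(\lambda+1)u'+tu''$, and the expansion of $\frac1t\|\lambda u+tu'\|^2$ together with $\xi=\lambda(\lambda+1-\alpha)$ to eliminate the $\langle u,u'\rangle$ term; your density argument via Proposition \ref{pro}, extending the identity from strong to mild solutions, is a sound addition that the paper leaves implicit.
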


The next lemma illustrates the relationship between $E(t)$ and $E_{\lambda}(t)$.

\begin{lemma}\label{lem2}
Assume (H1), \eqref{nu} and
\begin{equation}\label{t01}
t^2_1\ge\max \left\{\frac{2\lambda}{\nu}|\alpha-1|,\frac{\lambda}{\nu}|3\lambda+1-\alpha|\right\}.
\end{equation}
If $u$ is a solution to \eqref{1.1}, then
\begin{equation}\label{Elambdalower}
\frac{t^2}{2}E(t)\leq E_{\lambda}(t)\leq 2t^2E(t), \quad\forall t\ge t_1.
\end{equation}
\end{lemma}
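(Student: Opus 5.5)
The inequality \eqref{Elambdalower} is a pointwise statement at each fixed $t\ge t_1$, so no dynamics is needed. I will compare $E_\lambda(t)$ with $t^2E(t)$ term by term. The cross term $\lambda t\langle u,u'\rangle$ and the possibly negative coefficient $\xi=\lambda(\lambda+1-\alpha)$ will be absorbed into the $\|A^{1/2}u\|^2$ part of $t^2E(t)$ using the coercivity \eqref{nu}. The only other ingredient is $F\ge 0$ from (H1)(1). Everything below is evaluated at a fixed $t\ge t_1$, and I write $u=u(t)$, $u'=u'(t)$.

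\emph{Upper bound.} Apply the elementary inequality $\|\lambda u+tu'\|^2\le 2\lambda^2\|u\|^2+2t^2\|u'\|^2$. This gives
\begin{equation*}
E_\lambda(t)\le t^2F(u)+t^2\|u'\|^2+\frac{t^2}{2}\|A^{1/2}u\|^2+\Big(\lambda^2+\frac{\xi}{2}\Big)\|u\|^2 .
\end{equation*}
Here $\lambda^2+\xi/2=\frac{\lambda}{2}(3\lambda+1-\alpha)$. The right-hand side is at most $2t^2E(t)$ provided
\begin{equation*}
\tfrac{\lambda}{2}|3\lambda+1-\alpha|\,\|u\|^2\le \tfrac{3}{2}t^2\|A^{1/2}u\|^2 .
\end{equation*}
By \eqref{nu} we have $\|u\|^2\le \nu^{-1}\|A^{1/2}u\|^2$, so this holds whenever $t^2\ge \frac{\lambda}{3\nu}|3\lambda+1-\alpha|$. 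That is implied by \eqref{t01}, with room to spare. We also use $t^2F\le 2t^2F$, which holds since $F\ge0$.

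\emph{Lower bound.} Use the reverse inequality $\|\lambda u+tu'\|^2\ge \frac{t^2}{2}\|u'\|^2-\lambda^2\|u\|^2$, which follows from $2\lambda t|\langle u,u'\rangle|\le \frac{t^2}{2}\|u'\|^2+2\lambda^2\|u\|^2$. This yields
\begin{equation*}
E_\lambda(t)\ge t^2F(u)+\frac{t^2}{4}\|u'\|^2+\frac{t^2}{2}\|A^{1/2}u\|^2+\frac{\xi-\lambda^2}{2}\|u\|^2,
\end{equation*}
where $\xi-\lambda^2=\lambda(1-\alpha)$. To reach $\frac{t^2}{2}E(t)$ it suffices to absorb the last term into half of the $\|A^{1/2}u\|^2$ term, i.e. to have
\begin{equation*}
\tfrac{\lambda}{2}|\alpha-1|\,\|u\|^2\le \tfrac{t^2}{4}\|A^{1/2}u\|^2 .
\end{equation*}
By \eqref{nu} this holds as soon as $t^2\ge \frac{2\lambda}{\nu}|\alpha-1|$, which is the first term in \eqref{t01}. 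The remaining terms already dominate their counterparts: $t^2F\ge \frac{t^2}{2}F$ and $\frac{t^2}{4}\|u'\|^2=\frac{t^2}{2}\cdot\frac12\|u'\|^2$.

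Since $t\ge t_1$ and \eqref{t01} holds, both conditions are satisfied and \eqref{Elambdalower} follows. There is no real obstacle here. The only point requiring care is choosing the Young splittings so that the constants match exactly the two thresholds in \eqref{t01}, and handling the sign of $\xi$ by using absolute values. The argument applies to mild solutions as well, since it uses only the values $u(t)\in\mathcal{H}_1$ and $u'(t)\in\mathcal{H}$.
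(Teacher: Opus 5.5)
Your proof is the paper's argument: you use the same two splittings $\frac12\|\lambda u+tu'\|^2\ge\frac{t^2}{4}\|u'\|^2-\frac{\lambda^2}{2}\|u\|^2$ and $\frac12\|\lambda u+tu'\|^2\le\lambda^2\|u\|^2+t^2\|u'\|^2$, and absorb the leftover $\|u\|^2$ terms (coefficients $\frac{\lambda}{2}(1-\alpha)$ and $\frac{\lambda}{2}(3\lambda+1-\alpha)$) into $\|A^{1/2}u\|^2$ via \eqref{nu}. There is one arithmetic slip in your upper bound: $2t^2E(t)$ contains only $t^2\|A^{1/2}u\|^2$, so after your $\frac{t^2}{2}\|A^{1/2}u\|^2$ the available slack is $\frac{t^2}{2}\|A^{1/2}u\|^2$, not $\frac32 t^2\|A^{1/2}u\|^2$; the required threshold is therefore $t^2\ge\frac{\lambda}{\nu}|3\lambda+1-\alpha|$, which is exactly the second entry of \eqref{t01} with no room to spare, so the conclusion still holds.
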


\begin{proof}
On one hand,
\begin{align*}
\frac{1}{2}\left\|\lambda u(t)+t u'(t)\right\|^{2}&=\frac{1}{2}\left(\lambda^2\|u(t)\|^2+2\lambda t\langle u(t),u'(t)\rangle+t^2\|u'(t)\|^2\right)\\
&\geq\frac{t^2}{4}\|u'(t)\|^2-\frac{\lambda^2}{2}\|u(t)\|^2,
\end{align*}
which implies
\begin{align*}
E_{\lambda}(t)&=t^{2}F(u(t))+\frac{1}{2}\left\|\lambda u(t)+t u'(t)\right\|^{2}+\frac{\xi}{2}\left\|u(t)\right\|^{2}+\frac{t^2}{2}\|A^{1/2}u(t)\|^2\\
&\ge t^{2}F(u(t))+\frac{t^2}{4}\|u'(t)\|^2+\frac{t^2}{2}\|A^{1/2}u(t)\|^2-\frac{|\lambda^2-\xi|}{2\nu}\|A^{1/2}u(t)\|^2\\
&\ge\frac{t^{2}}{2}F(u(t))+\frac{t^2}{4}\|u'(t)\|^2+\frac{t^2}{4}\|A^{1/2}u(t)\|^2,
\end{align*}
where the last inequality is from $\xi=\lambda(\lambda+1-\alpha)$ and \eqref{t01}. That is
$$
\frac{t^2}{2}E(t)\leq E_{\lambda}(t).
$$

On the other hand, by $\xi=\lambda(\lambda+1-\alpha)$ and \eqref{t01}, we deduce
\begin{align*}
E_{\lambda}(t)&=t^{2}F(u(t))+\frac{1}{2}\left\|\lambda u(t)+t u'(t)\right\|^{2}+\frac{\xi}{2}\left\|u(t)\right\|^{2}+\frac{t^2}{2}\|A^{1/2}u(t)\|^2\\
&\leq t^{2}F(u(t))+t^2\|u'(t)\|^2+\frac{|2\lambda^2+\xi|}{2\nu}\left\|A^{1/2}u(t)\right\|^{2}+\frac{t^2}{2}\|A^{1/2}u(t)\|^2\\
&\leq 2t^{2}F(u(t))+t^2\|u'(t)\|^2+t^2\|A^{1/2}u(t)\|^2.
\end{align*}

Hence, we complete the proof.
\end{proof}

The third lemma is a preliminary result about evaluating the energy decay along each trajectory $u(\cdot)$.

\begin{lemma}\label{lem3}
Let Assumptions (H1) and (H2)(1) hold. Let $\eta=\alpha-2-2\varepsilon$ and $\lambda=\frac{\alpha}{2}+\varepsilon$ with some small positive constant $\varepsilon$. In addition, we assume that \eqref{gamma} and \eqref{nu} hold with
\begin{equation}\label{t02}
t^{2}_{2,\varepsilon}\ge\frac{\lambda(1+2\varepsilon)}{2\nu \varepsilon}|\alpha-2-2\varepsilon|+t_1,
\end{equation}
where $t_1$ is defined in \eqref{t01}.
Then, $$
E(t)\leq 2H(t_{2,\varepsilon})t^{-\alpha+2\varepsilon},\quad\forall t\ge t_{2,\varepsilon}.
$$
\end{lemma}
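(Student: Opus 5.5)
The plan is to show that $H$ is nonincreasing on $[t_{2,\varepsilon},+\infty)$, and then convert this into the energy bound with the lower estimate of Lemma \ref{lem2}. Since $t_{2,\varepsilon}\ge t_1$ by \eqref{t02}, Lemma \ref{lem2} applies for $t\ge t_{2,\varepsilon}$ and gives $E(t)\le 2t^{-2}E_\lambda(t)=2t^{-2-\eta}H(t)$. With $\eta=\alpha-2-2\varepsilon$ we have $-2-\eta=-\alpha+2\varepsilon$. So once $H(t)\le H(t_{2,\varepsilon})$ is established, the claimed estimate $E(t)\le 2H(t_{2,\varepsilon})t^{-\alpha+2\varepsilon}$ follows at once.

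To get monotonicity I will work first with strong solutions and use the identity \eqref{H'} of Lemma \ref{lem1}, inserting the specific choices of $\eta$ and $\lambda$. The coefficients become:
\begin{itemize}
\item $2+\eta=\alpha-2\varepsilon$;
\item $2+\eta+2\lambda-2\alpha=0$, so the $b_u$ term disappears entirely;
\item $2+\eta-2\lambda=-4\varepsilon$;
\item $\eta-2\lambda=-2-4\varepsilon$.
\end{itemize}
For the source terms I use (H2)(1) and $F\ge 0$:
$$(2+\eta)tF-\lambda t\langle\nabla F(u),u\rangle\le (\alpha-2\varepsilon-\lambda s)\,tF(u).$$
This is nonpositive, because $\lambda s=(\tfrac{\alpha}{2}+\varepsilon)s>\alpha+2\varepsilon$ when $s>2$. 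What remains is
$$H'(t)\le t^{\eta}\bigl[-2(1+2\varepsilon)\xi\, c_u(t)-4\varepsilon\, d_u(t)\bigr].$$

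If $\xi\ge0$ we are done. The only delicate case is $\xi<0$, where the $c_u$ term has the wrong sign and must be absorbed into the $d_u$ term by coercivity \eqref{nu}. Here $\lambda+1-\alpha=-\tfrac12(\alpha-2-2\varepsilon)$, so $|\xi|=\tfrac{\lambda}{2}|\alpha-2-2\varepsilon|$. The required inequality
$$(1+2\varepsilon)|\xi|\,t^{-1}\|u\|^2\le 2\varepsilon\, t\,\|A^{1/2}u\|^2$$
follows from $\|A^{1/2}u\|^2\ge\nu\|u\|^2$ as soon as
$$t^2\ge \frac{(1+2\varepsilon)\lambda|\alpha-2-2\varepsilon|}{4\varepsilon\nu}.$$
This is guaranteed by \eqref{t02}, which is in fact a stronger requirement. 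Hence $H'\le0$ on $[t_{2,\varepsilon},\infty)$.

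Finally, for mild solutions I will approximate the initial data in $\mathcal{H}_1\times\mathcal{H}$ by data in $D(A)\times\mathcal{H}_1$. The continuous dependence in Proposition \ref{pro} then lets me pass to the limit in $E(t)\le 2H(t_{2,\varepsilon})t^{-\alpha+2\varepsilon}$, since both $E(t)$ and $H(t_{2,\varepsilon})$ are continuous in the energy norm. I expect the main obstacle to be the sign bookkeeping in the case $\xi<0$, which is exactly what the explicit threshold \eqref{t02} is designed to handle.
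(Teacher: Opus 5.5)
Your proposal is correct and is essentially the paper's proof. The paper likewise substitutes $\eta=\alpha-2-2\varepsilon$, $\lambda=\frac{\alpha}{2}+\varepsilon$ into \eqref{H'}, discards the $a_u$ term via (H2)(1), notes that the $b_u$ coefficient vanishes, absorbs the $c_u$ term into $-4\varepsilon d_u$ using \eqref{nu} and \eqref{t02} to get $H'\le 0$ on $[t_{2,\varepsilon},\infty)$, and concludes with \eqref{Elambdalower}. Your sign split on $\xi$ and your density argument for mild solutions only make explicit what the paper leaves implicit; like the paper, you are really assuming $t_{2,\varepsilon}\ge t_1$, although \eqref{t02} as literally written only gives $t_{2,\varepsilon}^2\ge t_1$.
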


\begin{proof}

Taking Assumption (H2)(1) into consideration, we infer from \eqref{H'} that
\begin{equation}\label{H'1}
\begin{aligned}
H'(t)\leq &t^{\eta}((2+\eta-s \lambda) a_u(t)+(2+\eta+ 2\lambda-2 \alpha) b_u(t)\\
&+\lambda(\eta-2\lambda)(\lambda+1-\alpha) c_u(t)+(2+\eta-2\lambda) d_u(t)).
\end{aligned}
\end{equation}
Next, since $\eta=\alpha-2-2\varepsilon$ and $\lambda=\frac{\alpha}{2}+\varepsilon$, we deduce
\begin{equation}\label{2+eta-s}
\begin{aligned}
2+\eta-s \lambda&=\alpha-\frac{s}{2}\alpha-(s+2)\varepsilon<0,\\
2+\eta+ 2\lambda-2 \alpha&=\alpha-2\varepsilon+\alpha+2\varepsilon-2\alpha=0,\\
\lambda(\lambda+1-\alpha)(\eta-2\lambda)&=\lambda(\alpha-2-2\varepsilon)(1+2\varepsilon),\\
2+\eta-2\lambda&=\alpha-2\varepsilon-\alpha-2\varepsilon=-4\varepsilon,
\end{aligned}
\end{equation}
which yields, by \eqref{H'1},
\begin{equation}\label{H'a>2}
H'(t)\leq t^{\eta}\left[\frac{2-s}{2}\alpha a_u(t)+\lambda(\alpha-2-2\varepsilon)(1+2\varepsilon)c_u(t)-4\varepsilon d_u(t)\right].
\end{equation}
Using \eqref{nu} and \eqref{t02} gives that
\begin{align*}
\lambda(\alpha-2-2\varepsilon)(1+2\varepsilon)c_u(t)-4\varepsilon d_u(t)&\leq\frac{\lambda}{2t}|\alpha-2-2\varepsilon|(1+2\varepsilon)\|u(t)\|^2-2\varepsilon t\|A^{1/2}u(t)\|^2\\
&\leq-\varepsilon t\|A^{1/2}u(t)\|^2,
\end{align*}
from which we obtain $H'(t)\leq0$ for any $t\ge t_{2,\varepsilon}$. That is,
$
E_{\lambda}(t)=t^{-\eta}H(t)\le t^{-\eta}H(t_{2,\varepsilon}).
$
Hence, it follows from \eqref{Elambdalower} that
$$
E(t)\leq 2E_{\lambda}(t)t^{-2}\le2 H(t_{3,\varepsilon})t^{-\eta-2}=2H(t_{3,\varepsilon})t^{-\alpha+2\varepsilon},\quad \forall t\ge t_{2,\varepsilon}.
$$
Therefore, we complete the proof.
\end{proof}

Now, we are ready to present the two main results in the section. The first one concerns upper decay estimate.

\begin{theorem}\label{thm1}
Let Assumptions (H1) and (H2)(1) hold. Assume that \eqref{gamma} and \eqref{nu} hold.
Let $(u_{0},u_{1})\in \mathcal{H}_1\times \mathcal{H} $, and $u(t)$ be the unique global mild solution of problem \eqref{1.1}.
Then, if $t_{3,\varepsilon}$ is large enough such that it satisfies \eqref{t01}, \eqref{t02}, and
\begin{equation}\label{t03}
t_{3,\varepsilon}\ge\frac{2\alpha|\alpha-2|}{\nu}+\sqrt{\frac{\alpha+2}{3\nu}},
\end{equation}
the solution energy satisfies
\begin{equation}\label{4Ht0}
E(t)\leq 8t^{\alpha}_{3,\varepsilon}E(t_0)t^{-\alpha},\quad\forall t\ge t_{3,\varepsilon}.
\end{equation}
\end{theorem}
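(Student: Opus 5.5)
Lemma \ref{lem3} loses $t^{2\varepsilon}$ because of the choice $\lambda=\frac{\alpha}{2}+\varepsilon$, $\eta=\alpha-2-2\varepsilon$. I will therefore set $\varepsilon=0$ directly: $\lambda=\frac{\alpha}{2}$, $\eta=\alpha-2$, so that $H(t)=t^{\alpha-2}E_\lambda(t)$. Coercivity \eqref{nu} will take over the role the positive multiple of $\varepsilon$ played in absorbing the $c_u$ term.

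With these parameters, Lemma \ref{lem1} together with (H2)(1) gives the coefficients
\begin{equation*}
2+\eta-s\lambda=\tfrac{(2-s)\alpha}{2}<0,\qquad 2+\eta+2\lambda-2\alpha=0,\qquad 2+\eta-2\lambda=0 .
\end{equation*}
The $c_u$-coefficient is $\lambda(\lambda+1-\alpha)(\eta-2\lambda)=-\frac{\alpha}{2}\left(1-\frac{\alpha}{2}\right)\cdot 2=\frac{\alpha(\alpha-2)}{2}$. Hence
\begin{equation*}
H'(t)\le t^{\alpha-2}\,\frac{\alpha(\alpha-2)}{2}\,c_u(t)=\frac{\alpha(\alpha-2)}{4}\,t^{\alpha-3}\|u(t)\|^2 .
\end{equation*}
This leftover term has no sign when $\alpha>2$, and no $d_u$ term is available to absorb it. This is the main obstacle.

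I would control it with $\|u\|^2\le \nu^{-1}\|A^{1/2}u\|^2\le 2\nu^{-1}E(t)$ and then with Lemma \ref{lem2}, $E(t)\le 2t^{-2}E_\lambda(t)$. This gives
\begin{equation*}
H'(t)\le \frac{\alpha|\alpha-2|}{\nu}\,t^{-2}H(t).
\end{equation*}
Grönwall on $[t_{3,\varepsilon},t]$ then yields $H(t)\le H(t_{3,\varepsilon})\exp\!\big(\tfrac{\alpha|\alpha-2|}{\nu t_{3,\varepsilon}}\big)$. The exponent is at most $1/2$ because of the first term in \eqref{t03}, so $H(t)\le 2H(t_{3,\varepsilon})$. (If $\alpha\le2$ we simply have $H'\le0$.) Applying Lemma \ref{lem2} again, which is valid since $t_{3,\varepsilon}$ satisfies \eqref{t01}, gives
\begin{equation*}
E(t)\le 2t^{-2}E_\lambda(t)=2t^{-\alpha}H(t)\le 4t^{-\alpha}t_{3,\varepsilon}^{\alpha-2}E_\lambda(t_{3,\varepsilon})\le 8t_{3,\varepsilon}^{\alpha}E(t_{3,\varepsilon})\,t^{-\alpha}.
\end{equation*}
Since $E$ is nonincreasing by Proposition \ref{pro}, $E(t_{3,\varepsilon})\le E(t_0)$, which is \eqref{4Ht0}. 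The second term $\sqrt{(\alpha+2)/(3\nu)}$ in \eqref{t03} I would keep available in case Lemma \ref{lem2} with $\lambda=\alpha/2$ needs a slightly larger threshold than \eqref{t01}, i.e.\ to verify $|\lambda^2-\xi|/(2\nu)\le t^2/4$ for these specific parameters.

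For mild solutions, I would first prove everything for strong solutions, where Lemma \ref{lem1} and the energy identity hold. The estimate then passes to mild solutions by density of $D(A)\times\mathcal{H}_1$ and continuous dependence on the initial data (Proposition \ref{pro}), since both sides of \eqref{4Ht0} are continuous in the data in $\mathcal{H}_1\times\mathcal{H}$.

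The key point to check carefully is the sign bookkeeping of the $c_u$ coefficient. Should the constant come out differently, the fallback is to keep $\varepsilon>0$ small and use the $-4\varepsilon d_u$ term together with coercivity to absorb the $c_u$ term uniformly. That route, however, only gives $t^{-\alpha+2\varepsilon}$, so the $\varepsilon=0$ Grönwall argument above is the intended route.
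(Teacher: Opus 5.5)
Your argument is correct, but it closes the estimate by a different mechanism than the paper. The paper never applies Gr\"onwall. For $\alpha>2$ it first uses Lemma \ref{lem3} (with $\lambda=\frac{\alpha}{2}+\varepsilon$, $\eta=\alpha-2-2\varepsilon$, where the $-4\varepsilon d_u$ term absorbs the $c_u$ term) to get the provisional bound $E(t)\le 2\tilde H(t_{3,\varepsilon})t^{-\alpha+2\varepsilon}$. It then inserts this into the $\varepsilon=0$ inequality $H'(t)\le\frac{\alpha(\alpha-2)}{4}t^{\alpha-3}\|u(t)\|^2$, so the right-hand side becomes an explicit integrable forcing of order $t^{-3+2\varepsilon}$. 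Integrating and using $\tilde H\le 2H$ (this is what the term $\sqrt{(\alpha+2)/(3\nu)}$ in \eqref{t03} is for) gives $H(t)\le 2H(t_{3,\varepsilon})$.

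So your ``fallback'' is in fact the paper's route, and it does not lose $t^{2\varepsilon}$: the lossy rate is only used as an a priori bound inside the sharp $\varepsilon=0$ identity. Your route instead bounds $\|u\|^2$ by $H$ itself through coercivity and Lemma \ref{lem2}. This makes Lemma \ref{lem3}, \eqref{t02} and the comparison $\tilde H\le 2H$ superfluous. Lemma \ref{lem2} needs only \eqref{t01}, so the second term of \eqref{t03} plays no role for you.

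What the paper's two-step rate-sharpening buys is robustness. In Section 4, where $A$ has a kernel, $\|u\|^2$ cannot be dominated by $\|A^{1/2}u\|^2$, so no self-closing differential inequality for $H$ is available. The bootstrap (Lemma \ref{th2'} together with (H3)(1)) still works there.

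One small correction. The chain $\|u\|^2\le 2\nu^{-1}E(t)\le 4\nu^{-1}t^{-2}E_\lambda(t)$ actually gives $H'(t)\le\frac{\alpha|\alpha-2|}{\nu}t^{-3}H(t)$. Your $t^{-2}$ version follows from this only for $t\ge 1$; the paper tacitly makes the same assumption when it writes $t^{-3+2\varepsilon}\le t^{-2}$. With the correct power, the Gr\"onwall factor is $\exp\bigl(\frac{\alpha|\alpha-2|}{2\nu t_{3,\varepsilon}^2}\bigr)\le e^{1/(4t_{3,\varepsilon})}$ by \eqref{t03}. This is below $2$ as soon as $t_{3,\varepsilon}\ge 1$, so the constant $8$ in \eqref{4Ht0} is unaffected. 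Your density argument for mild solutions fills a step the paper leaves implicit.
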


\begin{proof}
Throughout the proof, $\eta=\alpha-2$ and $\lambda=\frac{\alpha}{2}$. From \eqref{2+eta-s} with $\varepsilon=0$, we obtain
\begin{equation}\label{H'alpha}
H'(t)\leq \frac{\alpha(\alpha-2)}{2}t^{\eta} c_u(t)=\frac{\alpha(\alpha-2)}{4}t^{\eta-1}\|u(t)\|^2.
\end{equation}

\noindent {\bf Case I}:  \ {\it $\alpha\leq2$}. In this case, \eqref{H'alpha} indicates $H'(t)\leq0$ , and so $H(t)\leq H(t_{3,\varepsilon})$. Thus,
$$
E_{\lambda}(t)=H(t)t^{-\eta}\le H(t_{3,\varepsilon}) t^{-\eta}.
$$
Combining this with \eqref{Elambdalower}, we obtain
$$
E(t)\leq2E_{\lambda}(t)t^{-2}\leq 2H(t_{3,\varepsilon})t^{-\eta-2}=2H(t_{3,\varepsilon})t^{-\alpha}.
$$

\vspace{0.4cm}

\noindent {\bf Case II}: \  {\it $\alpha>2$}. We denote by $\tilde{E}_\lambda(t)$ (resp. $\tilde{H}(t)$) the functional $E_\lambda(t)$ (resp. $H(t)$) with the parameters $\eta=\alpha-2-2\varepsilon$ and $\lambda=\frac{\alpha}{2}+\varepsilon$ (as in Lemma \ref{lem3}).

From the Cauchy-Schwarz inequality, \eqref{nu} and \eqref{t03}, we have, for $\varepsilon<\frac{1}{3}$,
\begin{align*}
&\varepsilon t\langle u(t), u'(t)\rangle+\left(\varepsilon^2+\frac{\alpha+1}{2}\varepsilon\right)\|u(t)\|^2\\
\leq& \frac{t^2}{2}\|u'(t)\|^2+\frac{\varepsilon^2}{2}\|u(t)\|^2+\left(\varepsilon^2+\frac{\alpha+1}{2}\varepsilon\right)\|u(t)\|^2\\
\leq& \frac{t^2}{2}\|u'(t)\|^2+\frac{\alpha+2}{6}\|u(t)\|^2\\
\leq& \frac{t^2}{2}\|u'(t)\|^2+\frac{t^2}{2}\|A^{1/2}u(t)\|^2,
\end{align*}
which gives
\begin{equation}\label{tildeEH}
\tilde{E}_\lambda(t)\leq 2E_{\lambda}(t) \,\mbox{~and}\,\, \tilde{H}(t)\leq 2H(t),\, \mbox{for all}~ t\ge t_{3,\varepsilon}.
\end{equation}
Therefore, using the above inequality, Lemmas \ref{lem3}, \eqref{nu} and \eqref{H'alpha}, we have
\begin{align*}
H'(t)&\leq\frac{\alpha(\alpha-2)}{4}t^{\eta-1}\|u(t)\|^2
\leq\frac{\alpha(\alpha-2)}{\nu}\tilde{H}(t_{3,\varepsilon})t^{-3+2\varepsilon}
\leq\frac{2\alpha(\alpha-2)}{\nu}H(t_{3,\varepsilon})t^{-2},
\end{align*}
by $\varepsilon<1/3$. Accordingly,
\begin{align*}
H(t)\leq H(t_{3,\varepsilon})-\frac{2\alpha(\alpha-2)}{\nu}H(t_{3,\varepsilon})t^{-1}
+\frac{2\alpha(\alpha-2)}{\nu}H(t_{3,\varepsilon})t^{-1}_{3,\varepsilon}\leq 2H(t_{3,\varepsilon}),
\end{align*}
by \eqref{t03}. As a result, we obtain
$
E_{\lambda}(t)=H(t)t^{-\eta}\le 2 H(t_{3,\varepsilon})t^{-\eta}.
$
From $\eta=\alpha-2$ and \eqref{Elambdalower}, we derive
\begin{equation}\label{u1}
  E(t)\leq4 H(t_{3,\varepsilon})t^{-\eta-2}=4 H(t_{3,\varepsilon})t^{-\alpha}, \quad \forall t\ge t_{3,\varepsilon}.
\end{equation}
Then, it follows from Lemma \ref{lem2} that
$$
H(t)=t^{\eta}E_{\lambda}(t)\leq 2t^{\eta+2}E(t)=2t^{\alpha}E(t), \quad \forall t\ge t_{3,\varepsilon}.
$$
Hence
$$
H(t_{3,\varepsilon})\leq 2t^{\alpha}_{3,\varepsilon}E(t_{3,\varepsilon})\leq 2t^{\alpha}_{3,\varepsilon}E(t_0),
$$
where the last inequality is from the non-increasing property of $E(t)$. Combining this and \eqref{u1} shows \eqref{4Ht0}.
\end{proof}

The second theorem shows the existence of slow solutions, which means that the decay rate in Theorem \ref{thm1} is optimal.

\begin{theorem}\label{thm2}
Let Assumptions (H1) and (H2) hold. Assume that \eqref{gamma} and \eqref{nu} are satisfied. Then, for every $(u_0,u_1)\in \mathcal{H}_{1} \times \mathcal{H}\setminus\{(0,0)\}$, there exists $t_4\ge t_{3,\varepsilon}$,
where $t_{3,\varepsilon}$ is defined in Theorem \ref{thm1}, such that the unique global solution of problem \eqref{1.1} satisfies
\begin{equation}\label{t-alpha}
\|u'(t)\|^2+\|A^{1/2}u(t)\|^2+F(u(t))\ge \frac{t^{-\alpha}_{4}}{4}t^{2\alpha}_{0}E(t_0) t^{-\alpha},\quad \forall t\ge t_4.
\end{equation}
\end{theorem}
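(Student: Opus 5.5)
Since $E(t)\le \|u'(t)\|^2+\|A^{1/2}u(t)\|^2+F(u(t))$, it suffices to bound $t^{\alpha}E(t)$ from below. The natural tool is a Lyapunov-type functional whose derivative has the \emph{opposite} sign to the one used in Theorem~\ref{thm1}. Concretely, I would work with the weighted energy $G(t)=t^{\alpha}E(t)$, or better with a perturbed version
\[
G(t)=t^{\alpha}\Big(E(t)+\frac{\beta}{t}\langle u(t),u'(t)\rangle+\frac{\delta}{t^2}\|u(t)\|^2\Big),
\]
with $\beta,\delta$ chosen so that the cross terms in $G'$ cancel to leading order. Using $E'=-\frac{\alpha}{t}\|u'\|^2$, the main part of $(t^\alpha E)'$ is
\[
\alpha t^{\alpha-1}\Big(-\tfrac12\|u'\|^2+\tfrac12\|A^{1/2}u\|^2+F(u)\Big).
\]
This has no sign, so the corrector $\beta t^{\alpha-1}\langle u,u'\rangle$ is needed.

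Differentiate the corrector and use the equation $u''=-Au-\frac{\alpha}{t}u'-\nabla F(u)$. Its derivative gives $\beta t^{\alpha-1}\big(\|u'\|^2-\|A^{1/2}u\|^2-\langle\nabla F(u),u\rangle\big)$ plus terms of order $t^{\alpha-2}\langle u,u'\rangle$. Taking $\beta=\alpha/2$ exactly cancels the $\pm\frac{\alpha}{2}\|u'\|^2$ and $\|A^{1/2}u\|^2$ contributions. What remains is $\alpha t^{\alpha-1}\big(F(u)-\frac12\langle\nabla F(u),u\rangle\big)$ plus lower order terms.

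The $F$-terms are not sign-definite from (H2)(1) alone, so I would not try to make $G$ monotone. Instead I would show $|G'(t)|\le C t^{-1-\kappa}G(t)$ for some $\kappa>0$, which gives $G(t)\ge G(t_4)\exp(-C\int_{t_4}^\infty \tau^{-1-\kappa}d\tau)\ge \frac12 G(t_4)$. The required decay of the remaining terms comes from two sources.
\begin{itemize}
\item Terms of order $t^{\alpha-2}|\langle u,u'\rangle|$ and $t^{\alpha-3}\|u\|^2$ are $\le C t^{-1}\nu^{-1/2}\cdot t^{\alpha-1}E$, which is still only $O(t^{-1})G$. I would absorb these by the $\delta$-correction, which handles the $t^{-2}$ weighted terms, and then iterate.
\item The nonlinear terms use (H2)(2) with $w=0$: $F(u)\le C\|u\|_{\mathcal H_1}^{s}\le C E^{s/2}$. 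By Theorem~\ref{thm1}, $E(t)\le Ct^{-\alpha}$, so $t^{\alpha-1}F(u)\le C t^{-1-\alpha(s-2)/2}\,t^{\alpha}E$. This is integrable relative to $G$ with $\kappa=\alpha(s-2)/2$.
\end{itemize}
Coercivity \eqref{nu} ensures $\|u\|^2\le \nu^{-1}\cdot 2E$, so $G$ is comparable to $t^\alpha E$ for $t$ large. Then $G(t)\ge\frac12G(t_4)$ yields $t^\alpha E(t)\ge \frac14 t_4^\alpha E(t_4)$.

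It remains to compare $E(t_4)$ with $t_0^{2\alpha}t_4^{-2\alpha}E(t_0)$, so that $t_4^\alpha E(t_4)\ge t_4^{-\alpha}t_0^{2\alpha}E(t_0)$. I would get this from the crude differential inequality $E'=-\frac{\alpha}{t}\|u'\|^2\ge-\frac{2\alpha}{t}E$. This gives $E(t_4)\ge (t_0/t_4)^{2\alpha}E(t_0)$, and $E(t_0)>0$ since $(u_0,u_1)\ne(0,0)$ and $A$ is coercive. That is exactly the constant in \eqref{t-alpha}.

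The main obstacle is the linear $O(t^{-1})$ remainder terms. These are not integrable, and they would only give a lower bound $t^{-\alpha-C}$. I would first try to handle them by choosing $\delta$, and possibly a further $t^{-3}$ correction, so that they cancel exactly, as in the exact identity of Lemma~\ref{lem1} with $\lambda=\alpha/2$, $\eta=\alpha-2$. Failing that, I would use instead a lower bound on $H(t)$ from Lemma~\ref{lem1}: by \eqref{H'}, $H'$ has only the sign-indefinite term $\frac{\alpha(\alpha-2)}{4}t^{\alpha-3}\|u\|^2$ plus the $F$-terms. By \eqref{nu} and Theorem~\ref{thm1}, $t^{\alpha-3}\|u\|^2\le Ct^{-2}\cdot t^\alpha E$, which is integrable. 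So $H(t)\ge H(t_4)-C\,t_4^{-1}\sup H$, and Lemma~\ref{lem2} converts this back into a lower bound on $t^\alpha E$. I expect this second route to be the one that actually works cleanly.
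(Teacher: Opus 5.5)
Your proposal is correct, and its final form is the paper's argument. With $\beta=\alpha/2$ and $\delta=\alpha/4$, your $G$ is exactly $H=t^{\alpha-2}E_{\alpha/2}$, and Lemma~\ref{lem1} leaves only $\frac{\alpha(\alpha-2)}{4}t^{\alpha-3}\|u\|^2$ and the $F$-terms; these are integrable relative to $H$ by \eqref{nu}, (H2)(2) and Theorem~\ref{thm1}, and Lemma~\ref{lem2} together with $E(t)\ge (t_0/t)^{2\alpha}E(t_0)$ concludes (the paper only additionally splits off $\alpha\ge 2$, where the $\|u\|^2$ term has a good sign).

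Two small corrections. First, the $O(t^{-1})$ ``main obstacle'' is an arithmetic slip: $t^{\alpha-2}|\langle u,u'\rangle|\le C\nu^{-1/2}t^{\alpha-2}E$ is $O(t^{-2})$ relative to $t^{\alpha}E$, so your first route already works under \eqref{nu}. Second, in ``$H(t)\ge H(t_4)-Ct_4^{-1}\sup H$'' the supremum must be taken over $[t_4,\infty)$ and bounded by a multiple of $H(t_4)$, either via your Gr\"onwall form $|H'|\le Ct^{-1-\kappa}H$ or by rerunning Theorem~\ref{thm1} from $t_4$, as the paper implicitly does; a bound by $H(t_{3,\varepsilon})$ alone would not in general beat the only available lower bound $H(t_4)\gtrsim t_4^{-\alpha}$.
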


\begin{proof}

First, we consider the case $F=0$, and let $\eta=\alpha-2$ and $\lambda=\frac{\alpha}{2}$. Then, from \eqref{2+eta-s} with $\varepsilon=0$ and \eqref{H'}, we obtain
$$
H'(t)= \frac{\alpha(\alpha-2)}{2}t^{\eta} c_u(t)=\frac{\alpha(\alpha-2)}{4}t^{\eta-1}\|u(t)\|^2.
$$
Obviously, the above equality equals zero if $\alpha=2$.

When $\alpha >2$, we have $H'(t)\geq0$, and so $H(t)\geq H(t_{3,\varepsilon})$. That is,
$
E_{\lambda}(t)\geq H(t_{3,\varepsilon})t^{-\eta}.
$
Hence, using \eqref{Elambdalower} gives
\begin{align*}
\|u'(t)\|^2+\|A^{1/2}u(t)\|^2\geq E_{\lambda}(t) t^{-2}\geq H(t_{3,\varepsilon})t^{-\alpha}.
\end{align*}

When $\alpha<2$, from $\eta=\alpha-2$, \eqref{nu} and \eqref{H'alpha}, we find
\begin{align*}
H(t)&=H(t_{3,\varepsilon})+\frac{\alpha(\alpha-2)}{4}\int^{t}_{t_{3,\varepsilon}}\tau^{\eta-1}\|u(\tau)\|^2d\tau\\
&\geq H(t_{3,\varepsilon})+\frac{\alpha(\alpha-2)}{4\nu}\int^{t}_{t_{3,\varepsilon}}\tau^{\eta-1}\|A^{1/2}u(\tau)\|^2d\tau\\
&\geq H(t_{3,\varepsilon})+\frac{\alpha(\alpha-2)}{\nu}H(t_{3,\varepsilon})\int^{t}_{t_{3,\varepsilon}}\tau^{\eta-1-\alpha}d\tau\\
&\geq H(t_{3,\varepsilon})+\frac{\alpha(\alpha-2)}{-2\nu}H(t_{3,\varepsilon})t^{-2}+\frac{\alpha(\alpha-2)}{2\nu}
H(t_{3,\varepsilon})t^{-2}_{3,\varepsilon}\\
&\geq H(t_{3,\varepsilon})+\frac{\alpha(\alpha-2)}{2\nu}H(t_{3,\varepsilon})t^{-2}_{3,\varepsilon}\\
&\geq \frac{1}{2}H(t_{3,\varepsilon}),
\end{align*}
where the last inequality is from \eqref{t03}. This and $H(t)=t^{\eta}E_{\lambda}(t)$ together yield
$$
E_{\lambda}(t)=H(t)t^{-\eta}\geq\frac{1}{2}H(t_{3,\varepsilon})t^{-\eta}.
$$
Then, from Lemma \ref{lem2}, we obtain
\begin{equation}\label{Eu}
\|u'(t)\|^2+\|A^{1/2}u(t)\|^2\geq t^{-2}E_{\lambda}(t)\geq\frac{H(t_{3,\varepsilon})}{2}t^{-\alpha}.
\end{equation}
Hence, it follows from \eqref{1.1} that
$$
E(t)=-\gamma(t)\|u'(t)\|^2\ge -2\gamma(t)E(t),
$$
from which we obtain
$$
E(t)\ge t^{2\alpha}_0E(t_0)t^{-2\alpha}, \quad \forall t\ge t_0.
$$
Combining this and Lemma \ref{lem2}, we get
$$
H(t_{3,\varepsilon})=t^{\eta}_{3,\varepsilon}E_{\lambda}(t_{3,\varepsilon})\ge \frac{t^{\eta+2}_{3,\varepsilon}}{2}E(t_{3,\varepsilon})=\frac{t^{\alpha}_{3,\varepsilon}}{2}E(t_{3,\varepsilon})\ge \frac{t^{-\alpha}_{3,\varepsilon}}{2}t^{2\alpha}_{0}E(t_0).
$$
This and \eqref{Eu} give
\begin{equation}\label{Eut3}
\|u'(t)\|^2+\|A^{1/2}u(t)\|^2\geq \frac{t^{-\alpha}_{3,\varepsilon}}{4}t^{2\alpha}_{0}E(t_0)t^{-\alpha}, \quad \forall t\ge t_{3,\varepsilon},
\end{equation}
and hence prove \eqref{t-alpha}.

Next, consider $F\neq0$, and let $\eta=\alpha-2$ and $\lambda=\frac{\alpha}{2}$.
Hence, it follows from \eqref{H'} and \eqref{2+eta-s} with $\varepsilon=0$ that
\begin{align*}
H'(t)&=t^{\eta}\left(\alpha tF(u(t))-\lambda t\langle\nabla F(u(t)), u(t)\rangle+\frac{\alpha(\alpha-2)}{2}c_{u}(t)\right)\\
&\ge -\lambda||\nabla F(u(t))||\cdot|| u(t)||t^{\eta+1}+\frac{\alpha(\alpha-2)}{4}||u(t)||^2t^{\eta-1}.
\end{align*}
Using (H2)(2) with $w=0$ and \eqref{nu}, we obtain, $\forall v\in \mathcal{H}_1$,
$$
\|\nabla F(v)\|\leq L_1\|v\|^{s-1}_{\mathcal{H}_1}\leq 2^{\frac{s-1}{2}}L_1\left(\|v(t)\|^{s-1}+\|A^{1/2}v(t)\|^{s-1}\right)\leq C\|A^{1/2}v(t)\|^{s-1}.
$$
Accordingly, we deduce
\begin{equation}\label{H'C0}
H'(t)\ge -C_0\|A^{1/2}u(t)\|^{s}t^{\eta+1}+\frac{\alpha(\alpha-2)}{4}||u(t)||^2t^{\eta-1},
\end{equation}
with a positive constant $C_0$.

Later, we estimate $H(t)$. Combining \eqref{Elambdalower} and \eqref{u1}, we deduce
\begin{equation}\label{Ht1t2}
H(t)=t^{\eta}E_{\lambda}(t)\le 2t^{\eta+2}E(t)\leq 8H(t_{3,\varepsilon}), \quad \forall t\ge t_{3,\varepsilon}.
\end{equation}
Then, we distinguish two cases.

\vspace{0.4cm}

\noindent {\bf Case I}: {\it $\alpha\ge2$}. In this case, for $t\ge t_4\ge t_{3,\varepsilon}$, it follows from \eqref{u1} and \eqref{H'C0} that
\begin{align*}
H'(t)\geq -C_0\|A^{1/2}u(t)\|^{s}t^{\eta+1}\ge -C\left[H(t_4)t^{-\alpha}\right]^{\frac{s}{2}}t^{\alpha-1}.
\end{align*}
Then, integrate the inequality, and make use of \eqref{tildeEH} and \eqref{Ht1t2}. We get
\begin{align*}
H(t)\geq H(t_4) -C\left[H(t_4)\right]^{\frac{s}{2}}\int^{t}_{t_4}\tau^{\alpha-\frac{s}{2}\alpha-1}d\tau
\geq H(t_4)-CH(t_4)\left[8H(t_{3,\varepsilon})\right]^{\frac{s-2}{2}}t^{-\frac{s-2}{2}\alpha}_4.
\end{align*}
Then, for every $(u_0,u_1)\in \mathcal{H}_{1} \times \mathcal{H}\setminus\{(0,0)\}$, we can choose $t_4> t_{3,\varepsilon}$ large enough such that
$$
C\left[8H(t_{3,\varepsilon})\right]^{\frac{s-2}{2}}t^{-\frac{s-2}{2}\alpha}_4<\frac{1}{2}.
$$
Hence, $H(t)\geq \frac{1}{2}H(t_4)$. Proceeding similarly as in the case when $F=0$ and $\alpha>2$, we can get the conclusion.

\vspace{0.4cm}

\noindent {\bf Case II}: {\it $\alpha<2$}.
 Using \eqref{nu} and \eqref{H'C0}, we deduce
\begin{align*}
H'(t)\geq&  -C_0\|A^{1/2}u(t)\|^{s}t^{\eta+1}+\frac{\alpha(\alpha-2)}{4\nu}||A^{1/2}u(t)||^2t^{\eta-1}.
\end{align*}
Hence, integrating the inequality we obtain
\begin{align*}
H(t)\geq&H(t_4)  -C_0\int^{t}_{t_4}\|A^{1/2}u(\tau)\|^{s}\tau^{\eta+1}d\tau+
\int^{t}_{t_4}\frac{\alpha(\alpha-2)}{4\nu}||A^{1/2}u(\tau)||^2\tau^{\eta-1}d\tau\\
\geq&H(t_4) -C\int^{t}_{t_4}\left[ H(t_4)\tau^{-\alpha}\right]^{\frac{s}{2}}\tau^{\alpha-1}d\tau
-\int^{t}_{t_4}\frac{4\alpha}{\nu} H(t_4)\tau^{-3}d\tau\\
\ge &H(t_4)-CH(t_4)\left[8H(t_{3,\varepsilon})\right]^{\frac{s-2}{s}}t^{-\frac{s-2}{2}\alpha}_4-\frac{2\alpha}{\nu} H(t_4)t^{-2}_{4},
\end{align*}
by Lemma \ref{lem3} and \eqref{tildeEH}. Then, for every $(u_0,u_1)\in \mathcal{H}_{1} \times \mathcal{H}\setminus\{(0,0)\}$, we can choose $t_4> t_{3,\varepsilon}$ large enough such that
$$
C\left[8H(t_{3,\varepsilon})\right]^{\frac{s-2}{s}}t^{-\frac{s-2}{2}\alpha}_4\leq \frac{1}{4},~\mbox{and}\,\,\,\frac{2\alpha}{\nu} t^{-2}_{4}\leq \frac{1}{4}.
$$
Consequently, $H(t)\ge \frac{1}{2}H(t_4)$, and we complete the proof.
\end{proof}

\section{The case of $A$ being not coercive}

In this section, $A$ is not coercive and we assume
\begin{equation}\label{nu3}
\exists \nu>0, \text { such that }\|A^{1 / 2} u\|^{2} \geq \nu\|u\|^{2}, \quad \forall u \in D(A^{1 / 2}) \cap \operatorname{ker}(A)^{\perp}.
\end{equation}
The above assumption is also called ellipticity-like condition; see \cite{CAB}.

We need more assumptions on $F$ to study the decay rate of energy.

\textbf{Assumption (H3)}

(1) There exists a function $M_0: R \rightarrow R$ which is bounded on bounded sets, such that
$$
\|u\|^{s}\leq M_0(\|A^{1/2}u\|)(\|A^{1/2}u\|^{2}+F(u)),\quad\forall u\in \mathcal{H}_1,
$$
where $s$ is from Assumption (H2).

(2) There exists a real number $R>0$ such that
$$
\|Q\nabla F(u)\| \leq R(\|u\|^{s-2}+\|A^{1 / 2} u\|^{s-2})\|A^{1 / 2} u\|,\quad\forall u\in \mathcal{H}_1,
$$
where $Q=I-P$, and $P: \mathcal{H} \rightarrow  \ker{A}$ denotes the orthogonal projection on $\ker{A}$.
\vspace{0.5cm}

From Proposition $\ref{pro}$, we know that
$E(t)$
is a non-increasing function. Hence
$$\frac{1}{2}\|A^{1/2}u(t)\|^{2}\leq E(t)\leq E(t_0),$$
which means that the function $M_0(\|A^{1/2}u\|)$ in (H3)(1) can be treated as a constant. (H3)(2) has been used in \cite{LUO3} to obtain lower energy decay estimates for \eqref{1.1} with $\alpha>\frac{2s}{s-2}$.

First, we state the result about upper decay estimates from \cite[Theorem 2.3]{LUO3}.

\begin{theorem}\label{thm2.3}
Suppose that Assumptions (H1), (H2)(1), (H3)(1) and \eqref{gamma} hold with $\frac{s+2}{s-2}\le\alpha<\frac{2s}{s-2}$. Let $(u_0, u_1)\in \mathcal{H}_1\times \mathcal{H}$, and $u(t)$ be the solution of \eqref{1.1}. Then
$$
E(t)\leq M(E(t_0))t^{-\alpha},\quad \forall t\ge t_0,
$$
where M is some positive function being bounded on bounded sets.
\end{theorem}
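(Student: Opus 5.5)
Theorem \ref{thm2.3} is quoted from \cite[Theorem 2.3]{LUO3}, so one could simply cite it. For completeness, I would reprove it with the same Lyapunov machinery as in Section 3: take the functional $H(t)=t^{\eta}E_\lambda(t)$ with $\eta=\alpha-2$ and $\lambda=\alpha/2$, and use Lemma \ref{lem1}. With these choices the $b_u$ coefficient vanishes and the $d_u$ coefficient is $2+\eta-2\lambda=0$. Using (H2)(1), we then get
\[
H'(t)\le t^{\eta}\Big[\Big(\alpha-\tfrac{s\alpha}{2}\Big)a_u(t)+\tfrac{\alpha(\alpha-2)}{2}c_u(t)\Big].
\]
Since $\alpha\ge\frac{s+2}{s-2}>1$, the $a_u$ coefficient is strictly negative. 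Only the term $\frac{\alpha(\alpha-2)}{4}t^{\eta-1}\|u\|^2$ remains to be controlled; it is harmless when $\alpha\le 2$.

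\textbf{Lower bound for $E_\lambda$.} Lemma \ref{lem2} is not available, since $A$ is not coercive. I would split $u=Pu+Qu$. The component $Qu$ is handled by \eqref{nu3}, exactly as in Lemma \ref{lem2}. The kernel component $\|Pu\|^2$ is controlled via (H3)(1): $\|u\|^2\le C(\|A^{1/2}u\|^2+F(u))^{2/s}$, with $M_0$ treated as a constant because $E$ is non-increasing. I would also use the crude bound $E(t)\le E(t_0)$ and the preliminary decay $E(t)=O(t^{-\frac{2\alpha s}{s+2}})$, which is the $A=0$ type estimate obtained from the standard functional with $\lambda=\frac{2\alpha}{s+2}$ (cf. \eqref{nA}). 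The aim is to show that the negative $\xi\|u\|^2$ and cross terms are absorbed up to an error of order $t^{2-\alpha}$. This gives
\[
E(t)\le C t^{-2}\big(E_\lambda(t)+\text{lower order}\big).
\]

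\textbf{Bootstrap on the remainder.} Next I would integrate $H'$. The remainder $t^{\alpha-3}\|u\|^2$ is estimated through (H3)(1) by $t^{\alpha-3}E(t)^{2/s}$. Plugging in the current decay rate $E=O(t^{-\beta})$ gives an integrand of order $t^{\alpha-3-2\beta/s}$. I then iterate rate sharpening: starting from $\beta_0=\frac{2\alpha s}{s+2}$, each step improves $\beta$ until the integral converges, which happens once $2\beta/s>\alpha-2$. The result is $H$ bounded, hence $E=O(t^{-\alpha})$.

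\textbf{Main obstacle.} The condition $\alpha\ge\frac{s+2}{s-2}$ is what makes the starting rate $\beta_0\ge\alpha$ compatible. The condition $\alpha<\frac{2s}{s-2}$ is exactly $2\alpha/s>\alpha-2$, which ensures convergence at the final step. I expect the main obstacle to be handling the negative $\xi\|Pu\|^2$ term in the lower bound for $E_\lambda$, together with carrying constants that depend only on $E(t_0)$ so as to produce the function $M$.
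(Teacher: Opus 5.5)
\textbf{There is a genuine gap: your bootstrap starts from a false estimate.} The preliminary decay $E(t)=O\big(t^{-\frac{2\alpha s}{s+2}}\big)$ does not hold in this setting.
\begin{itemize}
\item Take $\lambda=\frac{2\alpha}{s+2}$ and $\eta=\frac{2\alpha s}{s+2}-2$, the choice that makes the $a_u$ and $b_u$ coefficients in Lemma \ref{lem1} nonpositive. Then the $d_u$ coefficient is $2+\eta-2\lambda=\frac{2\alpha(s-2)}{s+2}>0$. So the term $\frac{t}{2}\|A^{1/2}u\|^2$ is not dissipated once $A\neq0$.
\item Even for $A=0$, the optimal rate in \eqref{nA} is $t^{-\frac{2s}{s-2}}$. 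This is slower than $t^{-\frac{2\alpha s}{s+2}}$ when $\alpha>\frac{s+2}{s-2}$.
\item Since $\frac{2\alpha s}{s+2}>\alpha$ for $s>2$, your rate contradicts Theorem \ref{thm2}. The scalar equation $u''+u+\frac{\alpha}{t}u'+|u|^{s-2}u=0$ satisfies every hypothesis of Theorem \ref{thm2.3}, yet all its nonzero solutions have energy of order exactly $t^{-\alpha}$. The rate likewise contradicts Theorems \ref{thm3} and \ref{thm4}.
\item The argument is also circular. If that rate held, the statement would follow at once. Indeed $2\beta_0/s=\frac{4\alpha}{s+2}>\alpha-2$ already, so your ``iteration'' has no steps.
\item You also planned to use this false rate to absorb the negative $\xi\|u\|^2$ and cross terms in the lower bound for $E_\lambda$, so that step is unsupported as well.
\item Your explanation of the hypothesis $\alpha\ge\frac{s+2}{s-2}$ is off: $\frac{2\alpha s}{s+2}\ge\alpha$ holds for every $\alpha>0$.
\end{itemize}

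\textbf{What is sound, and how to repair it.} The remaining ingredients are correct: the functional with $\eta=\alpha-2$, $\lambda=\frac{\alpha}{2}$; the bound $\|u\|^2\le(2M_0E)^{2/s}$ from (H3)(1); and integrability of the remainder exactly when $2\beta/s>\alpha-2$. Moreover, (H3)(1) controls all of $\|u\|$, so you need neither the $P/Q$ splitting nor \eqref{nu3}. Indeed $E_\lambda\ge\frac{t^2}{2}E-C\|u\|^2$ gives $t^2E\le 2E_\lambda+CE^{2/s}$. There are two ways to fix the argument.
\begin{itemize}
\item[(a)] Genuinely start from $\beta_0=0$, i.e.\ from $E\le E(t_0)$. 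While $t^{\alpha-3-2\beta_k/s}$ is not integrable, integrating $H'$ gives $E_\lambda\le Ct^{-2\beta_k/s}$. Hence $E=O(t^{-\beta_{k+1}})$ with $\beta_{k+1}=2+\frac{2\beta_k}{s}$, which increases to $\frac{2s}{s-2}$. Since $\frac{2s}{s-2}>\frac{s(\alpha-2)}{2}$ precisely because $\alpha<\frac{2s}{s-2}$, finitely many steps (avoiding the logarithmic borderline) make $H$ bounded. This yields $t^2E\le Ct^{2-\alpha}$, i.e.\ $E=O(t^{-\alpha})$.
\item[(b)] Follow the two-step scheme the paper uses in the analogous places: Lemma \ref{lem3} followed by Theorem \ref{thm1}, Case II, and the proof of Lemma \ref{th2'}(2).
  \begin{itemize}
  \item First use the perturbed functional with $\eta=\alpha-2-2\varepsilon$, $\lambda=\frac{\alpha}{2}+\varepsilon$. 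Its $d_u$ coefficient is $-4\varepsilon$. Together with the negative $a_u$ term, this absorbs $t^{-1}\|u\|^2\le\varepsilon_0t\|u\|^s+Ct^{-\frac{s+2}{s-2}}$ via (H3)(1). The result is $F+\|A^{1/2}u\|^2=O(t^{-\alpha+2\varepsilon})$ for $\alpha\le\frac{2s}{s-2}$.
  \item Then feed this into the remainder of the $\varepsilon=0$ functional. The remainder becomes $O\big(t^{\alpha-3-\frac{2(\alpha-2\varepsilon)}{s}}\big)$, which is integrable for small $\varepsilon$ exactly because $\alpha<\frac{2s}{s-2}$.
  \end{itemize}
\end{itemize}
For this theorem the paper itself only cites \cite{LUO3}.
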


Then, we will establish two theorems regarding lower decay estimates of the energy. To the end, we need to do some preparatory work.

Let $P,Q$ be as in (H3)(2); they are the orthogonal projections on the kernel and the range of $A$, respectively. Set $w=Pu$ and $v=Qu$. It is clear that
$u=w+v$, and $w$ and $v$ solve, respectively,
\begin{eqnarray}\label{wt}
\left\{\begin{array}{ll}
w''(t)+\gamma(t)w'(t)+\nabla F(w(t))+g(t)=0, \quad \forall t\geq t_0,\\[0.36cm]
w(t_0)=P u_{0}, ~w'(t_0)=P u_{1},
\end{array} \right.
\end{eqnarray}
with $g(t)=P\nabla F(u(t))-\nabla F(w(t))$, and
\begin{eqnarray}\label{vt}
\left\{\begin{array}{ll}
v''(t)+A v(t)+\gamma(t)v'(t)+Q\nabla F(u(t))=0, \quad \forall t\geq t_0,\\[0.36cm]
v(t_0)=Q u_{0}, ~v'(t_0)=Q u_{1}.
\end{array} \right.
\end{eqnarray}
Moreover, we have
$
\|A^{1/2}u(t)\|^2=\|A^{1/2}v(t)\|^2,
$
and
\begin{equation}\label{vnu}
\|A^{1 / 2} v(t)\|^{2} \geq \nu\|v(t)\|^{2}.
\end{equation}
For \eqref{vt}, we define
\begin{equation}\label{venergy}
E_v(t)=\|v'(t)\|^2+\|A^{1/2}v(t)\|^2.
\end{equation}
Also, like the energy functions $E_{\lambda}(t)$ and $H(t)$ in Section 3, we define
$$
E_{\lambda,v}(t)=\frac{1}{2}\left\|\lambda v(t)+t v'(t)\right\|^{2}+\frac{\xi}{2}\left\|v(t)\right\|^{2}+\frac{t^2}{2}\|A^{1/2}v(t)\|^2,~\mbox{and}
$$
\begin{equation}\label{Hv}
H_v(t)=t^{\eta}E_{\lambda,v}(t)+\int^{t}_{t_0}\left\langle\lambda v(\tau)+\tau v'(\tau), \tau^{\eta+1} Q\nabla F(u(\tau))\right\rangle d \tau,
\end{equation}
where $\lambda>0$ and $\eta$ are constant, and $\xi=\lambda(\lambda+1-\alpha)$. Put
$$
b_v(t)=\frac{1}{2 t}\left\|\lambda v(t)+t v'(t)\right\|^{2},\,\, c_v(t)=\frac{1}{2 t}\left\|v(t)\right\|^{2},\,\,
d_v(t)=\frac{t}{2}\|A^{1/2}v(t)\|^2,
$$
we have
$E_{\lambda,v}(t)=t(b_v(t)+\xi c_v(t)+d_v(t)).$

Next, we give several lemmas concerning $v$.
\begin{lemma}\label{lem0}
Assume (H1). Let $v:\left[t_{0},+\infty\right) \rightarrow \mathcal{H}$ be a solution of \eqref{vt}. Then
\begin{equation}\label{Hv'}
\begin{aligned}
H'_v(t)= t^{\eta}((2+\eta+ 2\lambda-2 \alpha) b_v(t)+\lambda(\eta-2\lambda)(\lambda+1-\alpha) c_v(t)+(2+\eta-2 \lambda) d_v(t)).
\end{aligned}
\end{equation}
\end{lemma}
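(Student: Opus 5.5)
Differentiate the two pieces of $H_v$ separately and use the equation \eqref{vt} to remove $v''$. By the fundamental theorem of calculus, the derivative of the integral term in \eqref{Hv} is $t^{\eta+1}\langle\lambda v(t)+t v'(t), Q\nabla F(u(t))\rangle$. This term is included precisely to cancel the source contribution that appears when $t^{\eta}E_{\lambda,v}$ is differentiated. We may first work with strong solutions, i.e. data in $D(A)\times\mathcal{H}_1$, so that $v\in C^2([t_0,\infty);\mathcal{H})\cap C^1([t_0,\infty);\mathcal{H}_1)$. The general case then follows by density and the continuous dependence in Proposition \ref{pro}, applied to the integrated form of the identity.

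\textbf{Derivative of $E_{\lambda,v}$.} Write $z=\lambda v+tv'$, so $z'=(\lambda+1)v'+tv''$. By \eqref{vt}, $tv''=-tAv-\alpha v'-tQ\nabla F(u)$, hence
$$z'=(\lambda+1-\alpha)v'-tAv-tQ\nabla F(u).$$
Therefore
$$\frac{d}{dt}\frac12\|z\|^2=(\lambda+1-\alpha)\langle z,v'\rangle-t\langle z,Av\rangle-t\langle z,Q\nabla F(u)\rangle.$$
Expanding $\langle z,Av\rangle=\lambda\|A^{1/2}v\|^2+t\langle A^{1/2}v,A^{1/2}v'\rangle$, the term $-t^2\langle A^{1/2}v,A^{1/2}v'\rangle$ cancels against the corresponding term in $\frac{d}{dt}\frac{t^2}{2}\|A^{1/2}v\|^2=t\|A^{1/2}v\|^2+t^2\langle A^{1/2}v,A^{1/2}v'\rangle$. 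The term $\xi\langle v,v'\rangle$ coming from $\frac{\xi}{2}\|v\|^2$ combines with $(\lambda+1-\alpha)\langle z,v'\rangle$.

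\textbf{Re-expressing $\langle z,v'\rangle$.} Use $tv'=z-\lambda v$, which gives $\langle z,v'\rangle=\frac1t(\|z\|^2-\lambda\langle z,v\rangle)$. Substituting $\langle z,v\rangle=\lambda\|v\|^2+t\langle v,v'\rangle$ and $\xi=\lambda(\lambda+1-\alpha)$, the cross terms $\langle v,v'\rangle$ cancel exactly. What remains is
$$\frac{d}{dt}E_{\lambda,v}=\frac{\lambda+1-\alpha}{t}\|z\|^2-\frac{\lambda^2(\lambda+1-\alpha)}{t}\|v\|^2+(1-\lambda)t\|A^{1/2}v\|^2-t\langle z,Q\nabla F(u)\rangle.$$
In terms of $b_v,c_v,d_v$ this reads
$$E_{\lambda,v}'=2(\lambda+1-\alpha)b_v-2\lambda\xi c_v+2(1-\lambda)d_v-t\langle z,Q\nabla F(u)\rangle.$$

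\textbf{Assembling $H_v'$.} Now $\frac{d}{dt}(t^{\eta}E_{\lambda,v})=\eta t^{\eta-1}E_{\lambda,v}+t^{\eta}E'_{\lambda,v}$, with $t^{\eta-1}E_{\lambda,v}=t^{\eta}(b_v+\xi c_v+d_v)$. The source term $-t^{\eta+1}\langle z,Q\nabla F(u)\rangle$ is cancelled by the derivative of the integral term. Collecting coefficients:
\begin{itemize}
\item $b_v$: $\eta+2\lambda+2-2\alpha$;
\item $c_v$: $\xi(\eta-2\lambda)=\lambda(\lambda+1-\alpha)(\eta-2\lambda)$;
\item $d_v$: $\eta+2-2\lambda$.
\end{itemize}
This is exactly \eqref{Hv'}.

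The computation mirrors Lemma \ref{lem1} with $F$ removed from the functional. The only delicate points are the bookkeeping that makes the $\langle v,v'\rangle$ terms cancel through the choice of $\xi$, and the regularity justification. For the latter, $\langle A^{1/2}v,A^{1/2}v'\rangle$ makes sense for strong solutions, and the mild-solution case is obtained by approximation.
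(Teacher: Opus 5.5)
Your proposal is correct and follows the paper's proof essentially line by line. You substitute the equation into $(\lambda+1)v'+tv''$, rewrite the $\|v'\|^2$ (equivalently $\langle z,v'\rangle$) term via $b_v$, $c_v$ and $\langle v,v'\rangle$, and use $\xi=\lambda(\lambda+1-\alpha)$ to cancel the cross term; the derivative of the integral term then cancels the source contribution. Your strong-solution-then-density remark is a regularity justification the paper leaves implicit.
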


\begin{proof}
Let us differentiate the energy $E_{\lambda,v}(t)$, getting
\begin{align*}
E'_{\lambda,v}(t)=&\left\langle\lambda v(t)+tv'(t), (\lambda+1) v'(t)+tv''(t)\right\rangle+\xi\left\langle v(t), v'(t)\right\rangle\\
&+t\|A^{1/2}v(t)\|^2+t^2\langle Av(t), v'(t)\rangle.
\end{align*}
Since $v(t)$ is a solution of the equation \eqref{vt}, we obtain
$$(\lambda+1) v'(t)+t v''(t)=(\lambda+1-\alpha) v'(t)-tAv(t)-tQ\nabla F(u(t)),$$
which implies
\begin{align*}
E'_{\lambda,v}(t)=& (\xi+\lambda(\lambda+1-\alpha))\left\langle v(t),v'(t)\right\rangle+(1-\lambda)t\|A^{1/2}v(t)\|^2 \\
&+t(\lambda+1-\alpha)\|v'(t)\|^2-\langle\lambda v(t)+tv'(t),tQ\nabla F(u(t))\rangle.
\end{align*}
Noting
\begin{align*}
\frac{1}{t}\left\|\lambda v(t)+t v'(t)\right\|^{2}= t\|v'(t)\|^{2}+2 \lambda\left\langle v'(t), v(t)\right\rangle +\frac{\lambda^{2}}{t}\left\|v(t)\right\|^{2},
\end{align*}
and $\xi=\lambda(\lambda+1-\alpha)$, we deduce
\begin{align*}
E'_{\lambda,v}(t)
=&(2\lambda+2-2\alpha)b_v(t)-2\lambda^2(\lambda+1-\alpha)c_v(t)\\
&+(2-2\lambda)d_u(t)-\langle\lambda v(t)+tv'(t),tQ\nabla F(u(t))\rangle.
\end{align*}
Since
$$
H'_v(t)=\eta t^{\eta-1}E_{\lambda,v}(t)+t^{\eta}E^{'}_{\lambda,v}(t)+\langle\lambda v(t) +tv'(t),t^{\eta+1}Q\nabla F(u(t))\rangle,
$$
we obtain \eqref{Hv'}.
\end{proof}

The next lemma is similar to Lemma \ref{lem2}.
\begin{lemma}\label{lem4}
Assume (H1), \eqref{gamma} and \eqref{nu3} hold.
If $v$ is a solution to \eqref{vt}, then
$$
\frac{t^2}{2}E_{v}(t)\leq E_{\lambda,v}(t)\leq 2t^2E_{v}(t), \quad\forall t\ge t_1,
$$
where $t_1$ is defined in \eqref{t01} and $E_{v}(t)$ is as in \eqref{venergy}.
\end{lemma}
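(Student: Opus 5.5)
The plan is to repeat the argument of Lemma \ref{lem2}, replacing $u$ by $v$, dropping the $F$ terms, and using the ellipticity-like condition in the form \eqref{vnu} in place of \eqref{nu}. Since $v(t)=Qu(t)\in D(A^{1/2})\cap\ker(A)^{\perp}$, \eqref{nu3} gives $\|v(t)\|^2\le \nu^{-1}\|A^{1/2}v(t)\|^2$ for all $t$. This is the only place where coercivity was used in Lemma \ref{lem2}, so the same constants work with the same threshold $t_1$ from \eqref{t01}.

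For the lower bound, I would expand $\frac12\|\lambda v+tv'\|^2$ and estimate the cross term by $2\lambda t|\langle v,v'\rangle|\le \frac{t^2}{2}\|v'\|^2+2\lambda^2\|v\|^2$. This gives
\[
\frac12\|\lambda v+tv'\|^2\ge \frac{t^2}{4}\|v'\|^2-\frac{\lambda^2}{2}\|v\|^2 .
\]
Adding $\frac{\xi}{2}\|v\|^2+\frac{t^2}{2}\|A^{1/2}v\|^2$, we get
\[
E_{\lambda,v}(t)\ge \frac{t^2}{4}\|v'\|^2+\frac{t^2}{2}\|A^{1/2}v\|^2-\frac{|\lambda^2-\xi|}{2\nu}\|A^{1/2}v\|^2 .
\]
Since $\lambda^2-\xi=\lambda(\alpha-1)$, condition \eqref{t01} gives $\frac{|\lambda^2-\xi|}{2\nu}\le \frac{t^2}{4}$ for $t\ge t_1$. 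Hence $E_{\lambda,v}(t)\ge \frac{t^2}{4}(\|v'\|^2+\|A^{1/2}v\|^2)$.

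The target is $\frac{t^2}{2}E_v(t)$, and here a factor appears, because $E_v$ is defined without the $\frac12$ in \eqref{venergy}. So $\frac{t^2}{4}(\|v'\|^2+\|A^{1/2}v\|^2)=\frac{t^2}{4}E_v(t)$, which is only $\frac12\cdot\frac{t^2}{2}E_v(t)$. I would check whether the intended normalization matches that of Lemma \ref{lem2}, where $E$ carries the $\frac12$. If the stated constants are meant literally, I would sharpen the cross-term splitting; for instance, $2\lambda t|\langle v,v'\rangle|\le \delta t^2\|v'\|^2+\delta^{-1}\lambda^2\|v\|^2$ with a suitable $\delta$, absorbing the extra $\|v\|^2$ term via $\nu$ and possibly a larger $t$. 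Otherwise the statement should be read with the same $\frac12$-normalized energy as in Lemma \ref{lem2}. This bookkeeping of constants is the only delicate point.

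For the upper bound, I would use $\frac12\|\lambda v+tv'\|^2\le \lambda^2\|v\|^2+t^2\|v'\|^2$. This gives
\[
E_{\lambda,v}(t)\le t^2\|v'\|^2+\frac{|2\lambda^2+\xi|}{2\nu}\|A^{1/2}v\|^2+\frac{t^2}{2}\|A^{1/2}v\|^2 .
\]
Since $2\lambda^2+\xi=\lambda(3\lambda+1-\alpha)$, condition \eqref{t01} makes the middle coefficient at most $\frac{t^2}{2}$. Therefore $E_{\lambda,v}(t)\le t^2(\|v'\|^2+\|A^{1/2}v\|^2)=t^2E_v(t)\le 2t^2E_v(t)$, and this part holds comfortably.
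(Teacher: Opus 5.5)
Your argument matches the paper's. Its proof of Lemma \ref{lem4} is just ``argue as for Lemma \ref{lem2}'', and your two estimates are that computation with $v$ in place of $u$, no $F$-terms, and \eqref{vnu} replacing \eqref{nu}. Using \eqref{vnu} is legitimate because $v(t)=Qu(t)\in D(A^{1/2})\cap\ker(A)^{\perp}$. You are also right about the factor $2$. The lower bound holds for $E_v=\frac12\|v'\|^2+\frac12\|A^{1/2}v\|^2$, and the paper itself uses this normalization later: in the proof of Lemma \ref{lem8}, and in $E=E_v+E_w$ in the subsection ``Remarks and examples''. So that is the intended reading.

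Drop the fallback of sharpening the cross-term splitting, though, because with \eqref{venergy} taken literally the lower bound is false. Indeed
\[
E_{\lambda,v}(t)-\frac{t^2}{2}\bigl(\|v'(t)\|^2+\|A^{1/2}v(t)\|^2\bigr)=\frac{\lambda^2+\xi}{2}\|v(t)\|^2+\lambda t\langle v(t),v'(t)\rangle ,
\]
which is negative if $v(t)\neq0$ and $v'(t)=-Kv(t)$ with $K>(\lambda^2+\xi)/(2\lambda t)$. Such states occur for actual solutions: take $F\equiv0$ and solve the linear equation backward from time $t$. Any $\delta$-splitting leaves only $(1-\delta)\frac{t^2}{2}$ in front of $\|v'\|^2$, and taking $t$ larger does not help. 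The correct literal version is therefore $\frac{t^2}{4}E_v(t)\le E_{\lambda,v}(t)$, which only changes constants where the lemma is used.
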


\begin{proof}
Argue in the same way as for Lemma \ref{lem2}.
\end{proof}

The last lemma is to estimate $E_{\lambda, v}(t)$ from below.

\begin{lemma}\label{lem6}
Assume that (H1), (H2)(1), (H3)(1), \eqref{gamma}, \eqref{t01} and \eqref{nu3} hold. Let $\eta=\alpha-2$ and $\lambda=\frac{\alpha}{2}$.
Let $(u_{0},u_{1})\in \mathcal{H}_1\times \mathcal{H} $, and $u(t)$ be the unique global mild solution of problem \eqref{1.1}.
Then, $\forall t\ge T\ge t_1$, we have
\begin{equation}\label{tvare1}
\begin{aligned}
&t^{\eta}E_{\lambda,v}(t)-T^{\eta}E_{\lambda,v}(T)\\
=&\int^{t}_{T}\frac{\lambda(\alpha-2)}{2}||v(\tau)||^2\tau^{\eta-1}d \tau-\int^{t}_{T}\left\langle\lambda v(\tau)+\tau v'(\tau), \tau^{\eta+1} Q\nabla F(u(\tau))\right\rangle d \tau.
\end{aligned}
\end{equation}
\end{lemma}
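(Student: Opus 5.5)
The identity \eqref{tvare1} follows from Lemma \ref{lem0} with the particular parameters $\eta=\alpha-2$ and $\lambda=\frac{\alpha}{2}$, integrated from $T$ to $t$. I will first evaluate the coefficients in \eqref{Hv'} at these parameters. This is exactly the computation \eqref{2+eta-s} with $\varepsilon=0$:
\begin{equation*}
2+\eta+2\lambda-2\alpha=0,\qquad 2+\eta-2\lambda=0,\qquad \lambda(\eta-2\lambda)(\lambda+1-\alpha)=\lambda(\alpha-2).
\end{equation*}
The last one holds because $\eta-2\lambda=-2$ and $\lambda+1-\alpha=1-\frac{\alpha}{2}$. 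Hence \eqref{Hv'} reduces to
\begin{equation*}
H_v'(t)=\lambda(\alpha-2)t^{\eta}c_v(t)=\frac{\lambda(\alpha-2)}{2}\|v(t)\|^2t^{\eta-1}.
\end{equation*}

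Next I integrate this over $[T,t]$ and use the definition \eqref{Hv} of $H_v$. The integral terms $\int_{t_0}^{t}$ and $\int_{t_0}^{T}$ subtract to give $\int_T^t\langle\lambda v+\tau v',\tau^{\eta+1}Q\nabla F(u)\rangle d\tau$. Moving this term to the right-hand side gives exactly \eqref{tvare1}. The hypotheses $T\ge t_1$, \eqref{t01} and \eqref{nu3} are not needed for the identity itself; they only matter in later use together with Lemma \ref{lem4}.

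The main obstacle is rigor for mild solutions. Lemma \ref{lem0} differentiates $E_{\lambda,v}$, which requires $v$ to be a strong solution. I would therefore first prove \eqref{tvare1} for $(u_0,u_1)\in D(A)\times\mathcal{H}_1$, where Proposition \ref{pro} gives a strong solution, and $Q$ commutes with $A$ so that $v=Qu$ solves \eqref{vt} strongly. I would then approximate general data in $\mathcal{H}_1\times\mathcal{H}$ by such data.

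For the limit, the continuous dependence in Proposition \ref{pro} gives convergence of $u$ in $C([T,t];\mathcal{H}_1)\cap C^1([T,t];\mathcal{H})$. Then $v$ and $v'$ converge in the same sense, since $Q$ is bounded and commutes with $A^{1/2}$. Also $Q\nabla F(u)$ converges uniformly in $\mathcal{H}$ by the local Lipschitz property (H1)(2). Therefore both sides of \eqref{tvare1} pass to the limit: the boundary terms $t^\eta E_{\lambda,v}$ converge, and so do both integrals, which involve only $\|v\|^2$ and the pairing of $\lambda v+\tau v'$ with $Q\nabla F(u)$ on a compact interval.
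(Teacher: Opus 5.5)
Your proposal is correct and follows the paper's proof: specialize \eqref{Hv'} to $\eta=\alpha-2$, $\lambda=\frac{\alpha}{2}$ so that only the $c_v$ term survives, then integrate over $[T,t]$ using the definition \eqref{Hv} of $H_v$. The approximation by data in $D(A)\times\mathcal{H}_1$, used to justify the differentiation in Lemma \ref{lem0} for mild solutions, is a rigor step the paper leaves implicit. Your remark that $T\ge t_1$, \eqref{t01} and \eqref{nu3} play no role in the identity itself is also accurate.
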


\begin{proof}
Since $\eta=\alpha-2$ and $\lambda=\frac{\alpha}{2}$, it follows from \eqref{Hv'} that
\begin{align*}
2+\eta+ 2\lambda&-2 \alpha=\alpha+\alpha-2\alpha=0,~2+\eta-2\lambda=0,\\
&\lambda(\lambda+1-\alpha)(\eta-2\lambda)=\lambda(\alpha-2),
\end{align*}
which yields
$$H'_v(t)=\frac{\lambda(\alpha-2)}{2}||v(t)||^2t^{\eta-1},\quad \forall t\ge t_1.$$
This and \eqref{Hv} give \eqref{tvare1}, and we complete the proof.
\end{proof}

Subsequently, we distinguish the case of $\alpha\in \left[\frac{s+2}{s-2}, 2\right]$ from that of $\alpha\in (2, 2s/(s-2)]$. The results for the case of $\alpha> 2s/(s-2)$ have been obtained
in \cite{LUO3}.

\subsection{$\frac{s+2}{s-2}\le\alpha\le2$}

In this case, $s\ge6$ from
$
\displaystyle\frac{s+2}{s-2}\le\alpha\le2.
$
We will use $\bar{C}_0, \bar{C}_1, \bar{C}_2, \cdot\cdot\cdot$ to denote generic positive constants independent of $t$. First, we give a lemma about the energy decay along each trajectory $u(\cdot)$, which is similar to Lemma \ref{lem3}. The proofs of this lemma and the following one will be given in Section 5.

\begin{lemma}\label{th2'}
Suppose that Assumptions (H1), (H2)(1), (H3)(1) and \eqref{gamma} hold. Let $(u_0, u_1)\in \mathcal{H}_1\times \mathcal{H}$, and
$u(t)$ be the solution of \eqref{1.1}.

\noindent 1.~If $\alpha\leq2$, then
\begin{equation}\label{ualpha<2}
F(u(t))+\|A^{1/2}v(t)\|^2\leq 2H^*(t_0)t^{-\alpha}, \quad \forall t\ge t_0.
\end{equation}
Here $H^*(t_0)$ stands for $H(t_0)$ with $\eta=\alpha-2$ and $\lambda=\frac{\alpha}{2}$.

\noindent (2).~If $2<\alpha\le\frac{2s}{s-2}$, then
\begin{equation}\label{ualpha>2}
F(u(t))+\|A^{1/2}v(t)\|^2\leq 4\tilde H^*(t_0)t^{-\alpha+2\varepsilon}+C_\varepsilon
 t^{\alpha-\frac{2s}{s-2}-2\varepsilon}_0t^{-\alpha+2\varepsilon}, \quad \forall t\ge t_0.
\end{equation}
Here $C_\varepsilon$ is a positive constant independent of $E(t_0)$ and $t_0$, $\tilde H^*(t_0)$ stands for $\tilde H(t_0)$ with $\eta=\alpha-2-2\varepsilon$ and $\lambda=\frac{\alpha}{2}+\varepsilon$, and
\begin{equation}\label{vare}
0<\varepsilon<\min\left\{\frac{1}{4(s-2)},\,~\frac{1}{2}\right\} ~~\mbox{fixed}.
\end{equation}
\end{lemma}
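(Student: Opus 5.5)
We work with the full functional $H(t)=t^{\eta}E_\lambda(t)$ of Section 2, built on the whole solution $u$ and not only on $v$, and use the identity of Lemma \ref{lem1}. Since $A$ is not coercive, Lemma \ref{lem2} is unavailable. We therefore use only the one-sided information that $E_\lambda(t)$ controls $t^2F(u(t))+\frac{t^2}{2}\|A^{1/2}u(t)\|^2$ up to the terms in $\|u\|^2$. Recall also that $\|A^{1/2}u\|=\|A^{1/2}v\|$.

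\textbf{Part 1 ($\alpha\le 2$).} Take $\eta=\alpha-2$ and $\lambda=\frac{\alpha}{2}$. By (H2)(1) and \eqref{H'},
\begin{equation*}
H'(t)\le t^{\eta}\Bigl[\Bigl(\alpha-\tfrac{s\alpha}{2}\Bigr)a_u(t)+\tfrac{\alpha(\alpha-2)}{2}c_u(t)\Bigr]\le 0,
\end{equation*}
since $s>2$, $\alpha\le 2$ and the $b_u,d_u$ coefficients vanish. Hence $H(t)\le H^*(t_0)$. For the lower bound on $H$, note that $\xi=\lambda(\lambda+1-\alpha)=\frac{\alpha}{2}(1-\frac{\alpha}{2})\ge 0$ when $\alpha\le 2$. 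Thus $E_\lambda(t)\ge t^2F(u(t))+\frac{t^2}{2}\|A^{1/2}v(t)\|^2$, with no coercivity needed. Consequently
\begin{equation*}
t^{\alpha}\Bigl(F(u(t))+\tfrac12\|A^{1/2}v(t)\|^2\Bigr)\le H^*(t_0),
\end{equation*}
which gives \eqref{ualpha<2} after adjusting the factor $2$.

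\textbf{Part 2 ($2<\alpha\le\frac{2s}{s-2}$).} Take $\eta=\alpha-2-2\varepsilon$ and $\lambda=\frac{\alpha}{2}+\varepsilon$, as in Lemma \ref{lem3}. Now $\xi<0$ and the $c_u$ coefficient $\lambda(\alpha-2-2\varepsilon)(1+2\varepsilon)$ is positive, so both must be absorbed by splitting $u=w+v$, with $\|u\|^2=\|w\|^2+\|v\|^2$.

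\emph{The $v$ part.} By \eqref{vnu}, $\|v\|^2\le\nu^{-1}\|A^{1/2}v\|^2$. For large $t$ this term is absorbed by $-4\varepsilon d_u(t)$ in the $H'$ inequality, and by $\frac{t^2}{2}\|A^{1/2}u\|^2$ in the lower bound of $E_\lambda$. For $t$ in a bounded initial interval we accept constants.

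\emph{The $w$ part.} This cannot be absorbed by $A$, so we use (H3)(1):
\begin{equation*}
\|w\|^2\le\|u\|^2\le C\bigl(\|A^{1/2}u\|^2+F(u)\bigr)^{2/s}.
\end{equation*}
By Young's inequality, $t^{\eta-1}\|u\|^2$ is bounded by $\delta t^{\eta}\bigl(t\,\|A^{1/2}u\|^2+tF(u)\bigr)$ plus a term of order $C_\delta\, t^{\eta-1-\frac{2}{s-2}\cdot\frac{s}{2}\cdot\ldots}$. The first piece is absorbed by the negative $a_u$ and $d_u$ terms, leaving a pure power $C_\delta t^{\beta}$. The exponent bookkeeping should give $\beta=\alpha-\frac{2s}{s-2}-2\varepsilon-1$, which is $<-1$ in this range. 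Integrating from $t_0$ then yields
\begin{equation*}
\tilde H(t)\le\tilde H^*(t_0)+C_\varepsilon t_0^{\alpha-\frac{2s}{s-2}-2\varepsilon}.
\end{equation*}
The negative term $\frac{\xi}{2}\|w\|^2$ in the lower bound of $E_\lambda$ is handled the same way: either by Young again, or via the energy bound of Theorem \ref{thm2.3}. This is the source of the extra factor $2$ and the summand $C_\varepsilon t_0^{\ldots}t^{-\alpha+2\varepsilon}$ in \eqref{ualpha>2}.

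\textbf{Main obstacle.} The hard step is the exponent bookkeeping in this Young-inequality absorption. One must check that the residual power is integrable exactly when $\alpha\le\frac{2s}{s-2}$, uniformly in $t_0$. The smallness condition \eqref{vare} on $\varepsilon$ should be what keeps the exponents of the right sign, in particular $\beta+1\le 0$ even at the endpoint $\alpha=\frac{2s}{s-2}$ because of the $-2\varepsilon$. If the direct Young split fails, the fallback is to use the available decay $F(u)+\|A^{1/2}u\|^2\lesssim t^{-\alpha}$ from Theorem \ref{thm2.3} inside (H3)(1). That gives $\|w\|^2\lesssim t^{-2\alpha/s}$, and the term $t^{\eta-1}\|w\|^2$ is then estimated directly.
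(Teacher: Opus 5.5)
Your proposal is correct and follows the paper's proof. Part 1 is identical: $H$ is monotone for $\eta=\alpha-2$, $\lambda=\frac{\alpha}{2}$, and $\xi\ge 0$ lets you read off $t^{\alpha}\bigl(F(u)+\frac12\|A^{1/2}u\|^2\bigr)\le H^*(t_0)$. Part 2 is the paper's argument: Young's inequality $t^{-1}\|u\|^2\le\varepsilon_0 t\|u\|^s+C_{\varepsilon_0}t^{-\frac{s+2}{s-2}}$ with (H3)(1), absorption into the negative $a_u$, $d_u$ terms, integration of the residual $t^{\alpha-\frac{2s}{s-2}-1-2\varepsilon}$, and the same Young bound for the $\frac{|\xi|}{2}t^{\eta}\|u\|^2$ term in the lower bound of $\tilde H$.

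You should drop the $w/v$ split and the coercivity treatment of $v$ via \eqref{vnu}. The paper does not use it. Taken literally, accepting constants on an initial interval would not give the stated bound for all $t\ge t_0$ with constant $4\tilde H^*(t_0)+C_\varepsilon t_0^{\alpha-\frac{2s}{s-2}-2\varepsilon}$. Your Young step already covers all of $\|u\|^2$, so the split is unnecessary. The fallback through Theorem \ref{thm2.3} is also unnecessary, and it would not cover $\alpha=\frac{2s}{s-2}$.
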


The estimates \eqref{ualpha<2} and \eqref{ualpha>2} establish the relationship between the decay rates of $\|A^{1/2}v(t)\|$ and $H^*(t_0)$ or $\tilde H^*(t_0)$. With these results, we can illustrate the relationship between the decay rates of $E_v$ and $E_v(t_1)$ for small initial data, showing how the coefficients depend on the initial data, which is essentially important in the following proof.
\begin{lemma}\label{lem8}
Let (H1), (H2), (H3), \eqref{gamma} and \eqref{nu3} hold with $\frac{s+2}{s-2}\le\alpha\le\frac{2s}{s-2}$. In addition, assume
\begin{equation}\label{epsilon10}
0<\epsilon_1\leq\min\left(\frac{1}{s^2}, \frac{s-2}{2s^2+3s+2}\right), ~~\mbox{fixed}.
\end{equation}
For every $(u_0,u_1)\in \mathcal{H}_{1} \times \mathcal{H}$, assume $t_5\ge t_1$ with $t_1$ in \eqref{t01} and
\begin{equation}\label{t04}
\begin{aligned}
t_5\geq&
\frac{\nu^2+2\nu+3}{\nu \epsilon_1}\alpha+
\left[\frac{4\bar{C}_1}{\epsilon_1\alpha}\left(\left[H^*(t_0)\right]^{\frac{s-2}{s}}+\left[H^*(t_0)\right]^{\frac{s-2}{2}}\right)\right]
^{\frac{s}{2}}\\
&+\left(\frac{8\bar{C}_1}{\epsilon_1\alpha}\right)^{s}\left[\tilde H^*(t_0)+C_\varepsilon t^{\alpha-\frac{2s}{s-2}-2\varepsilon}_{0} \right]^{s-2},
\end{aligned}
\end{equation}
where $\nu$ is from \eqref{nu3}, $\varepsilon$ is from \eqref{vare}, $H^*(t_0)$, $\tilde H^{*}(t_0)$ and $C_\varepsilon$ are defined in Lemma \ref{th2'}, and
\begin{equation}\label{C1}
\bar{C}_1=2^{\frac{s}{2}}R\left(M^{\frac{s-2}{s}}_{0}+1\right),
\end{equation}
with $M_0$ and $R$ defined in H3. Let $v=Qu$ solve \eqref{vt}.

Then, $\forall t\ge t_5$,
$$
\|v(t)\|^2+\|v'(t)\|^2+\|A^{1/2}v(t)\|^2\leq \frac{1+\epsilon_1}{1-\epsilon_1}\cdot\frac{2\nu+2}{\nu}E_v(t_5)t^{\frac{1-\epsilon_1}{1+\epsilon_1}\alpha}_{5}
t^{-\frac{1-\epsilon_1}{1+\epsilon_1}\alpha},
$$
where $E_v(t)$ is defined in \eqref{venergy}.
\end{lemma}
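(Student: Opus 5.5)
We will run a perturbed version of the Lyapunov argument of Lemma \ref{lem3}, with the free parameters moved slightly off the critical values. Concretely, take $\lambda=\frac{\alpha}{2}(1+\delta)$ and $\eta=\alpha\frac{1-\epsilon_1}{1+\epsilon_1}-2$, with $\delta$ comparable to $\epsilon_1$ and tuned so that three things hold. First, the $b_v$ coefficient $2+\eta+2\lambda-2\alpha$ in \eqref{Hv'} should be $\le 0$. Second, the $d_v$ coefficient $2+\eta-2\lambda$ should be negative, of size about $-c\epsilon_1\alpha$. Third, the $c_v$ term $\lambda(\eta-2\lambda)(\lambda+1-\alpha)c_v$ should be absorbed by a fraction of the negative $d_v$ term, using \eqref{vnu}. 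This absorption is where the condition $t_5\ge \frac{\nu^2+2\nu+3}{\nu\epsilon_1}\alpha$ enters, since $c_v\le \frac{1}{\nu t^2} d_v$. Lemma \ref{lem0} then gives
\[
\frac{d}{dt}\bigl(t^{\eta}E_{\lambda,v}(t)\bigr)\le -\tfrac{c\epsilon_1\alpha}{2}\,t^{\eta}d_v(t)-\bigl\langle\lambda v+tv',t^{\eta+1}Q\nabla F(u)\bigr\rangle .
\]
This is the exact analogue of \eqref{tvare1}, but now it carries a good negative term.

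The main obstacle is the nonlinear cross term, which we bound by (H3)(2). We have
\[
|\langle\lambda v+tv',Q\nabla F(u)\rangle|\le R\,\|\lambda v+tv'\|\,\bigl(\|u\|^{s-2}+\|A^{1/2}u\|^{s-2}\bigr)\|A^{1/2}v\| .
\]
Next we estimate $\|u\|^{s-2}$ by (H3)(1) as $(M_0(F+\|A^{1/2}v\|^2))^{(s-2)/s}$, and $\|A^{1/2}u\|^{s-2}$ directly. Both are then controlled by Lemma \ref{th2'}, which gives decay $t^{-\alpha}$ (case $\alpha\le2$) or $t^{-\alpha+2\varepsilon}$ (case $\alpha>2$), with constants $H^*(t_0)$ or $\tilde H^*(t_0)+C_\varepsilon t_0^{\alpha-\frac{2s}{s-2}-2\varepsilon}$. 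This is how $\bar C_1$ in \eqref{C1} arises. The cross term is then at most
\[
\bar C_1\, K\, t^{-\frac{(s-2)\alpha}{s}\cdot(\ldots)}\cdot t^{\eta}\, t\,\bigl(b_v+d_v\bigr),
\]
where $K$ stands for the relevant power of $H^*(t_0)$ or $\tilde H^*(t_0)+\ldots$, after applying Cauchy--Schwarz as $\|\lambda v+tv'\|\,\|A^{1/2}v\|\le t(b_v+d_v)$. The key check is the exponent. Since $\alpha\ge\frac{s+2}{s-2}$, the factor $t\cdot t^{-\alpha(s-2)/s}$ (respectively with the $\varepsilon$-loss, controlled by \eqref{vare}) decays like a negative power of $t$. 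Hence for $t\ge t_5$ the remaining terms of \eqref{t04} make this prefactor at most $\epsilon_1\alpha/4$. To absorb the cross term we also need a small negative margin on $b_v$, so we take $\delta$ so that its coefficient is about $-\epsilon_1\alpha$ as well. Since $s\ge 6$ when $\alpha\le2$, and by \eqref{epsilon10}, these exponent and coefficient constraints are compatible.

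With all this, $t^{\eta}E_{\lambda,v}(t)$ is nonincreasing on $[t_5,\infty)$. Lemma \ref{lem4}, with its constant adjusted to the perturbed $\lambda$ (the factor $\frac{1+\epsilon_1}{1-\epsilon_1}$), turns this into
\[
E_v(t)\lesssim E_v(t_5)\,t_5^{\eta+2}\,t^{-\eta-2},\qquad \eta+2=\tfrac{1-\epsilon_1}{1+\epsilon_1}\alpha .
\]
Finally, \eqref{vnu} gives $\|v\|^2\le\nu^{-1}\|A^{1/2}v\|^2$, which adds the factor $\frac{\nu+1}{\nu}$ and produces the stated constant $\frac{2\nu+2}{\nu}$.
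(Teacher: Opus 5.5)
Your proposal is correct, but it takes a different route from the paper. For this lemma the paper does not use the multiplier functional. It works with the first-order modified energy $\hat E_v=E_v+\gamma(t)\langle v,v'\rangle$ and shows $(1-\epsilon_1)E_v\le\hat E_v\le(1+\epsilon_1)E_v$ for $t\ge t_5$. It then absorbs $\gamma^2\langle v,v'\rangle$, $\gamma'\langle v,v'\rangle$ (using $|\gamma'|=\gamma^2/\alpha\le\gamma^2$, i.e. $\alpha>1$) and the two $Q\nabla F(u)$ terms into $-\gamma(\|v'\|^2+\|A^{1/2}v\|^2)$. This gives $\hat E_v'\le-\frac{1-\epsilon_1}{1+\epsilon_1}\gamma\hat E_v$, and Gronwall finishes the proof.

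You instead detune Lemma \ref{lem6}. Take $\kappa=\frac{1-\epsilon_1}{1+\epsilon_1}$, $\eta=\kappa\alpha-2$ and simply $\lambda=\alpha/2$; your $\delta$ can be $0$. Then the $b_v$ and $d_v$ coefficients in \eqref{Hv'} both equal $(\kappa-1)\alpha=-\frac{2\epsilon_1\alpha}{1+\epsilon_1}$. This margin pays for the cross term and for the $c_v$ term, which is positive only when $\alpha>2$ and is absorbed via \eqref{vnu} and the first term of \eqref{t04}.

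Both proofs rest on the same smallness input, $\|Q\nabla F(u)\|\le\frac{\epsilon_1}{4}\gamma(t)\|A^{1/2}v\|$ for $t\ge t_5$. This is exactly what the last two terms of \eqref{t04} encode via Lemma \ref{th2'} and (H3). The paper's route is shorter and needs no sign bookkeeping on the multiplier coefficients. Yours keeps a single functional for both this upper bound and the lower bounds of Theorems \ref{thm3}--\ref{thm4}. It also shows transparently that the loss $1-\kappa$ in the exponent is precisely the margin consumed by the nonlinearity.

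There are two slips to fix.

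\textbf{The Cauchy--Schwarz step.} The correct bound is $\|\lambda v+tv'\|\,\|A^{1/2}v\|=2\sqrt{b_vd_v}\le b_v+d_v$, not $\le t(b_v+d_v)$. With the weight $t^{\eta+1}$, the version you wrote costs an extra factor $t$, and the resulting exponent $2-\frac{s-2}{s}\alpha$ is positive at $\alpha=\frac{s+2}{s-2}$. Your subsequent display, equivalent to $t^{\eta}\,t^{1-\frac{s-2}{s}\alpha}(b_v+d_v)$, is the right one.

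\textbf{The constant.} Lemma \ref{lem4} only gives a two-sided comparison with ratio $4$. That yields a constant of the type $\frac{4\nu+4}{\nu}$, which is all that is used later in \eqref{vv}, rather than the stated $\frac{1+\epsilon_1}{1-\epsilon_1}\cdot\frac{2\nu+2}{\nu}$. To get the stated constant, note that for $\lambda=\alpha/2$ one has $E_{\lambda,v}-\frac{t^2}{2}(\|v'\|^2+\|A^{1/2}v\|^2)=\frac{\alpha t}{2}\langle v,v'\rangle+\frac{\alpha}{4}\|v\|^2$. By \eqref{vnu}, its size relative to $\frac{t^2}{2}(\|v'\|^2+\|A^{1/2}v\|^2)$ is at most $\frac{\alpha}{2\sqrt{\nu}\,t}+\frac{\alpha}{2\nu t^2}$, which is $\le\epsilon_1$ for $t\ge t_5$. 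So the factor $\frac{1+\epsilon_1}{1-\epsilon_1}$ comes from $t$ being large, not from perturbing $\lambda$.
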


Letting $\epsilon_1<\frac{1}{3}$, we infer
\begin{equation}\label{vv}
\|v(t)\|^2+\|v'(t)\|^2+\|A^{1/2}v(t)\|^2\leq \frac{4\nu+4}{\nu}E_v(t_5)t^{\frac{1-\epsilon_1}{1+\epsilon_1}\alpha}_{5}
t^{-\frac{1-\epsilon_1}{1+\epsilon_1}\alpha}.
\end{equation}
As a result, by \eqref{t04}, we have
\begin{equation}\label{lambdav+tv'}
\begin{aligned}
\alpha\| v(t)\|+t\|v'(t)\|&\leq (\alpha+t)\left[\frac{4\nu+4}{\nu}E_v(t_5)t^{\frac{1-\epsilon_1}{1+\epsilon_1}\alpha}_{5}
t^{-\frac{1-\epsilon_1}{1+\epsilon_1}\alpha}\right]^{\frac{1}{2}}\\
&\leq 2t\left[\frac{4\nu+4}{\nu}E_v(t_5)t^{\frac{1-\epsilon_1}{1+\epsilon_1}\alpha}_{5}
t^{-\frac{1-\epsilon_1}{1+\epsilon_1}\alpha}\right]^{\frac{1}{2}}.
\end{aligned}
\end{equation}

Making use of the Lemma \ref{lem8}, we can obtain a result of lower decay estimate, as follows.

\begin{theorem}\label{thm3}
Assume (H1), (H2), (H3), \eqref{nu3} and \eqref{vare} hold with $A\neq 0.$  Let \eqref{gamma} hold with $\frac{s+2}{s-2}\leq\alpha\le2$.
For every $(u_0,u_1)\in \mathcal{H}_{1} \times \mathcal{H}\setminus\left(\ker A \times \ker A\right)$, assume $t_6$ is large enough such that \eqref{t04} and
\begin{equation}\label{t05}
t_6\ge\left[2^{\frac{s}{s-2}}H^*(t_0)\right]^\frac{s(s+2)}{2\alpha}+\left(8s\bar{C}_3\right)^s+\frac{4\sqrt{2\nu+2}}{\nu},
\end{equation}
are satisfied, where
\begin{equation}\label{C3}
\bar{C}_3=\frac{8\nu+8}{\nu}\bar{C}_1,
\end{equation}
with $\bar{C}_1$ defined in \eqref{C1}, and $H^*(t_0)$ stands for $H(t_0)$ with $\eta=\alpha-2$ and $\lambda=\frac{\alpha}{2}$.

Then, the solution energy of \eqref{1.1} satisfies
\begin{equation}\label{Elower'}
E(t)\ge ct^{-\alpha}, \quad\forall t\ge t_6.
\end{equation}
\end{theorem}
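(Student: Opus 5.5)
The plan is to work entirely with the range component $v=Qu$. Since $\|A^{1/2}u\|=\|A^{1/2}v\|$ and $\|u'\|\ge\|v'\|$, we have $E(t)\ge\frac12 E_v(t)$, so it suffices to show $E_v(t)\ge c\,t^{-\alpha}$ for $t\ge t_6$. Because $(u_0,u_1)\notin\ker A\times\ker A$, we have $(v(t_0),v'(t_0))\neq(0,0)$. By uniqueness for the linear-in-$v$ problem \eqref{vt}, in which the forcing $Q\nabla F(u)$ is controlled by $\|A^{1/2}v\|$ through (H3)(2), $v$ never becomes identically zero. Hence $E_v(t)>0$ for all $t$; in particular $E_v(t_5)>0$ and $E_{\lambda,v}(t_6)>0$, where $t_5=t_6$ is chosen to satisfy \eqref{t04}. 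This positivity is what allows the theorem to cover every such initial datum.

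Take $\eta=\alpha-2$ and $\lambda=\alpha/2$, and start from the identity \eqref{tvare1} of Lemma \ref{lem6} with $T=t_6$. Since $\alpha\le2$, the first integral is nonpositive. By \eqref{vnu} and Lemma \ref{lem4} it is bounded by
\[
\frac{\alpha(2-\alpha)}{4\nu}\int_{T}^{t}\|A^{1/2}v\|^2\tau^{\eta-1}\,d\tau .
\]
The bound \eqref{vv} from Lemma \ref{lem8} gives $\|A^{1/2}v(\tau)\|^2\le \frac{4\nu+4}{\nu}E_v(T)\,T^{\beta}\tau^{-\beta}$ with $\beta=\frac{1-\epsilon_1}{1+\epsilon_1}\alpha$. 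Since $\eta-1-\beta<-1$, this integral is $\lesssim \nu^{-1}E_v(T)T^{\alpha-2}\cdot T^{-2}$ up to constants. It is therefore a small multiple of $T^{\eta}E_{\lambda,v}(T)\ge \frac12 T^{\alpha}E_v(T)$, and the term $4\sqrt{2\nu+2}/\nu$ in \eqref{t05} is meant to make it at most $\frac14 T^\eta E_{\lambda,v}(T)$.

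For the nonlinear integral I use Cauchy--Schwarz, \eqref{lambdav+tv'} and (H3)(2). Lemma \ref{th2'}(1) together with (H3)(1) gives $\|u\|^{s-2}+\|A^{1/2}u\|^{s-2}\lesssim (H^*(t_0)\tau^{-\alpha})^{(s-2)/s}$. The integrand is then bounded by
\[
\bar C_3\,E_v(T)\,T^{\beta}\,\tau^{\eta+2-\beta}\,\big(H^*(t_0)\tau^{-\alpha}\big)^{\frac{s-2}{s}},
\]
where I bound $(2t\sqrt{\cdot})\cdot\sqrt{\cdot}$ by $E_v(T)T^\beta\tau^{1-\beta}$ times constants. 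The exponent of $\tau$ is $\alpha-\beta-\frac{s-2}{s}\alpha$. Because $\epsilon_1\le 1/s^2$, we have $\alpha-\beta=\frac{2\epsilon_1}{1+\epsilon_1}\alpha$, which is much smaller than $\frac{s-2}{s}\alpha$, so this exponent is $<0$. Since $\alpha\ge\frac{s+2}{s-2}$ with $s\ge 6$, it should be below $-1$, possibly after shrinking $\epsilon_1$ as permitted by \eqref{epsilon10}; if not, I would have to redistribute powers more carefully. Integrating from $T$ gives a bound $C\,E_v(T)T^{\alpha}\cdot\big(H^*(t_0)\big)^{\frac{s-2}{s}}T^{-\frac{s-2}{s}\alpha+\frac{2\epsilon_1\alpha}{1+\epsilon_1}}$. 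The first two terms in \eqref{t05} are designed to make this at most $\frac14 T^\eta E_{\lambda,v}(T)$.

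Combining the two estimates gives $t^{\eta}E_{\lambda,v}(t)\ge\frac12 T^{\eta}E_{\lambda,v}(T)$ for $t\ge T=t_6$. By Lemma \ref{lem4}, $E_v(t)\ge \frac{1}{2t^2}E_{\lambda,v}(t)\ge \frac14 t_6^{\alpha-2}E_{\lambda,v}(t_6)\,t^{-\alpha}$, so $E(t)\ge c\,t^{-\alpha}$ with $c=\frac18 t_6^{\alpha-2}E_{\lambda,v}(t_6)>0$, which is \eqref{Elower'}. The main obstacle is the nonlinear integral. The smallness must come out proportional to $E_v(T)$ itself, not to $E(t_0)$, and this is exactly why Lemma \ref{lem8}'s bound in terms of $E_v(t_5)$ is essential. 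A secondary difficulty is keeping the $\tau$-exponent integrable, which relies on $\alpha\ge\frac{s+2}{s-2}$ and the choice of $\epsilon_1$.
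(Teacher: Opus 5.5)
Your proposal is correct and follows the paper's proof essentially step for step: identity \eqref{tvare1} with $T=t_6$, the bound \eqref{vv} with $t_5=t_6$ for the linear integral, (H3)(2) combined with (H3)(1), \eqref{ualpha<2} and \eqref{lambdav+tv'} for the nonlinear integral, then Lemma \ref{lem4}; your backward-uniqueness argument for $E_v(t_6)>0$ also justifies a point the paper only asserts. Two exponent slips are harmless but should be fixed: the linear integral is $O\big(E_v(T)\,T^{\alpha-2}\big)$, and the nonlinear one is $O\big(E_v(T)\,T^{\alpha}\cdot T^{1-\frac{s-2}{s}\alpha}\big)$ times $[H^*(t_0)]^{\frac{s-2}{s}}+[H^*(t_0)]^{\frac{s-2}{2}}$ (the second power comes from $\|A^{1/2}u\|^{s-2}$), which is still small relative to $T^{\alpha}E_v(T)$ because $\alpha\ge\frac{s+2}{s-2}$ gives $1-\frac{s-2}{s}\alpha\le-\frac{2}{s}$; likewise the integrability you hedged on holds with \eqref{epsilon10} as stated, since the $\tau$-exponent plus one is at most $-\frac{2}{s}+\frac{4}{s^2}<0$.
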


\begin{proof}
Choosing $T=t_6$ in \eqref{tvare1}, we need to estimate
$$
\int^{t}_{t_6}\tau^{\eta+1}\|\lambda v(\tau)+\tau v'(\tau)\|\|Q\nabla F(u(\tau))\|d\tau,
$$
and
$$
\int^{t}_{t_6}\frac{\lambda(\alpha-2)}{2}||v(\tau)||^2\tau^{\eta-1}d \tau.
$$

First, using (H3)(1), (H3)(2) and \eqref{ualpha<2}, we have
\begin{equation}\label{QF1}
\begin{aligned}
\|Q\nabla F(u(t))\|
\leq & R\left(M^{\frac{s-2}{s}}_{0}\left(\|A^{1/2}u\|^{2}+F(u)\right)^{\frac{s-2}{s}}+\|A^{1 / 2} u(t)\|^{s-2}\right)\|A^{1 / 2} v(t)\|\\
\leq& \bar{C}_1
\left(\left[H^*(t_0)\right]^{\frac{s-2}{s}}+\left[H^*(t_0)\right]^{\frac{s-2}{2}}\right)t^{-\frac{s-2}{s}\alpha}\|A^{1 / 2} v(t)\|,
\end{aligned}
\end{equation}
where $\bar{C}_1$ is a constant defined in \eqref{C1}. Combining this inequality, $\lambda=\frac{\alpha}{2}$, \eqref{vv}, \eqref{lambdav+tv'} and \eqref{C3}, we obtain
\begin{equation}\label{tau vv'}
\begin{aligned}
&\int^{t}_{t_6}\tau^{\eta+1}\|\lambda v(\tau)+\tau v'(\tau)\|\|Q\nabla F(u(\tau))\|d\tau\\
\leq& \bar{C}_1\left(\left[H^*(t_0)\right]^{\frac{s-2}{s}}+\left[H^*(t_0)\right]^{\frac{s-2}{2}}\right)\int^{t}_{t_6}\left(\lambda \|v(\tau)\|+\tau\| v'(\tau)\|\right)\|A^{1 / 2} v(t)\|\tau^{\eta+1-\frac{s-2}{s}\alpha}d\tau\\
\leq& \bar C_3\left(\left[H^*(t_0)\right]^{\frac{s-2}{s}}+\left[H^*(t_0)\right]^{\frac{s-2}{2}}\right)
E_v(t_6)t^{\frac{1-\epsilon_1}{1+\epsilon_1}\alpha}_{6}
\int^{t}_{t_1}\tau^{\frac{2\epsilon_1}{1+\epsilon_1}\alpha-\frac{s-2}{s}\alpha}d\tau\\
\leq& s\bar C_3\left(\left[H^*(t_0)\right]^{\frac{s-2}{s}}+\left[H^*(t_0)\right]^{\frac{s-2}{2}}\right)E_v(t_6)t^{1+\frac{2}{s}\alpha}_{6},
\end{aligned}
\end{equation}
due to \eqref{epsilon10} and
$$
\frac{2\epsilon_1}{1+\epsilon_1}\alpha-\frac{s-2}{s}\alpha+1<-\frac{1}{s}.
$$

Then, using \eqref{t05}, we find
\begin{align*}
\left[H^*(t_0)\right]^{\frac{s-2}{s}}+\left[H^*(t_0)\right]^{\frac{s-2}{2}}
\leq t_6^{\frac{s-2}{s(s+2)}\alpha},
\end{align*}
which implies that \eqref{tau vv'} becomes
\begin{equation}\label{1/s}
\begin{aligned}
\int^{t}_{t_6}\tau^{\eta+1}\|\lambda v(\tau)+\tau v'(\tau)\|\|Q\nabla F(u(\tau))\|d\tau\leq s\bar C_3E_v(t_6)t_6^{1+\frac{3s+2}{s(s+2)}\alpha}
\leq s\bar C_3E_v(t_6)t_6^{\alpha-\frac{1}{s}},
\end{aligned}
\end{equation}
where the last inequality is from
\begin{align*}
\alpha-\frac{3s+2}{s(s+2)}\alpha-\frac{1+s}{s}=\frac{s^2-s-2}{s^2+2s}\alpha-\frac{1+s}{s}\ge0,~\mbox{by}~\alpha\ge \frac{s+2}{s-2}.
\end{align*}

On the other hand, using \eqref{nu3} and \eqref{vv}, with $\bar{C}_4=\frac{4\nu+4}{\nu^2}$, we have
\begin{align*}
\int^{t}_{t_6}||v(\tau)||^2\tau^{\eta-1}d \tau&\le \frac{1}{\nu} \int^{t}_{t_6}||A^{1/2}v(\tau)||^2\tau^{\eta-1}d \tau\\
&\le \bar{C}_4E_v(t_6)t^{\frac{1-\epsilon_1}{1+\epsilon_1}\alpha}_{6}\int^{t}_{t_6}\tau^{\frac{2\epsilon_1}{1+\epsilon_1}\alpha-3}d \tau\\
&\le \bar{C}_4E_v(t_6)t^{\alpha-2}_{6},
\end{align*}
where the last inequality is from
$
\frac{2\epsilon_1}{1+\epsilon_1}\alpha-2<-1
$
(by \eqref{epsilon10}).
From these two estimates, $\eta=\alpha-2$, \eqref{tvare1}, \eqref{t05} and Lemma \ref{lem4}, it follows that
\begin{align*}
t^{\eta}E_{\lambda,v}(t)\geq t^{\eta}_{6}E_{\lambda,v}(t_6)-s\bar C_3E_v(t_6)t_{6}^{\alpha-\frac{1}{s}}-\bar C_4E_v(t_6)t^{\alpha-2}_{6}
\ge \frac{1}{4}t^{\alpha}_{6}E_{v}(t_6).
\end{align*}
Again, using Lemma \ref{lem4}, we obtain
$
E(t)\ge E_{v}(t)\ge 1/8t^{\alpha}_{6}E_{v}(t_6)t^{-\alpha}.
$
Hence, to obtain \eqref{Elower'}, the initial data should satisfy
$$(u_0,u_1)\in \mathcal{H}_{1} \times \mathcal{H}\setminus\left(\ker A \times \ker A\right),$$
such that $E_{v}(t_6)\neq 0$, and we complete the proof.
\end{proof}

\subsection{$\frac{s+2}{s-2}\leq2<\alpha\le\frac{2s}{s-2}$  or $2<\frac{s+2}{s-2}\leq\alpha\le\frac{2s}{s-2}$ }

The situation is more challenging, since \eqref{ualpha<2} is not satisfied. However, we can still obtain the existence of slow solutions in an open set. The second theorem (in this section) of lower decay estimate is stated as follows.
\begin{theorem}\label{thm4}
Assume (H1), (H2), (H3), \eqref{gamma}, and \eqref{nu3} with $A\neq0$. Assume either $\alpha\in \left(2, \frac{2s}{s-2}\right]$ with $s>6$, or $\alpha\in \left(\frac{s+2}{s-2},\frac{2s}{s-2}\right]$ with $s\le6$.
For every $(u_0,u_1)\in \mathcal{H}_{1} \times \mathcal{H}\setminus\left(\ker A \times \ker A\right)$, assume that $t_7$ is large enough such that \eqref{t01}, \eqref{t04}, and
\begin{equation}\label{t06}
t_7\ge \left[2\tilde{H}^*(t_0)\right]^{\frac{s+2}{\alpha}}+\left[2C_\varepsilon t^{\alpha-\frac{2s}{s-2}-2\varepsilon}_{0}\right]^{\frac{s(s+2)}{2\alpha}}+\left(16s\bar{C}_3\right)^{2s}+\frac{4\sqrt{2\nu+2}}{\nu},
\end{equation}
are satisfied, where $\varepsilon$ is from \eqref{vare}, $\tilde{H}^*(t_0)$ and $C_\varepsilon $ are defined in \eqref{ualpha>2}, and $\bar{C}_3$ is in \eqref{C3}.

Then, the solution energy of \eqref{1.1} satisfies
$$
E(t)\ge c t^{-\alpha},\quad t\ge t_7.
$$
\end{theorem}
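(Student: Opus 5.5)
The proof will follow the pattern of Theorem \ref{thm3}. The main change is that the identity \eqref{tvare1} from Lemma \ref{lem6}, with $\eta=\alpha-2$ and $\lambda=\frac{\alpha}{2}$, now has a nonnegative first integral $\frac{\lambda(\alpha-2)}{2}\int_T^t\|v\|^2\tau^{\eta-1}d\tau$, since $\alpha>2$. We simply drop it: choosing $T=t_7$ gives
\begin{equation*}
t^{\eta}E_{\lambda,v}(t)\ge t_7^{\eta}E_{\lambda,v}(t_7)-\int_{t_7}^{t}\tau^{\eta+1}\|\lambda v(\tau)+\tau v'(\tau)\|\,\|Q\nabla F(u(\tau))\|\,d\tau .
\end{equation*}
The whole task is then to show that the integral is at most $\frac14 t_7^{\alpha}E_v(t_7)$. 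Lemma \ref{lem4} then yields $t^{\eta}E_{\lambda,v}(t)\ge\frac14 t_7^{\alpha}E_v(t_7)$, hence $E(t)\ge E_v(t)/2\ge c\,t^{-\alpha}$, with $c>0$ because $E_v(t_7)\neq0$. That last point uses backward uniqueness, i.e. that $Qu\not\equiv 0$ once $(u_0,u_1)\notin\ker A\times\ker A$, exactly as in Theorem \ref{thm3}.

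To bound the nonlinear term, use (H3)(2), (H3)(1) and now \eqref{ualpha>2} instead of \eqref{ualpha<2}:
\begin{equation*}
\|Q\nabla F(u(t))\|\le \bar C_1\Big(K^{\frac{s-2}{s}}+K^{\frac{s-2}{2}}\Big)t^{-\frac{s-2}{s}(\alpha-2\varepsilon)}\|A^{1/2}v(t)\|,\qquad K:=4\tilde H^*(t_0)+C_\varepsilon t_0^{\alpha-\frac{2s}{s-2}-2\varepsilon}.
\end{equation*}
Then combine this with \eqref{vv} and \eqref{lambdav+tv'} (from Lemma \ref{lem8}, valid since $t_7$ satisfies \eqref{t04}). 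The integrand is bounded by
\begin{equation*}
\bar C_3(\cdots)E_v(t_7)\,t_7^{\frac{1-\epsilon_1}{1+\epsilon_1}\alpha}\,\tau^{\frac{2\epsilon_1}{1+\epsilon_1}\alpha-\frac{s-2}{s}\alpha+\frac{2(s-2)}{s}\varepsilon}.
\end{equation*}
The exponent must be $<-1-\frac1s$ or so, so that integrating gives roughly $s\bar C_3(\cdots)E_v(t_7)t_7^{1+\frac{2}{s}\alpha+\delta}$, with small $\delta$ from $\epsilon_1,\varepsilon$. This is where \eqref{epsilon10} and \eqref{vare} ($\varepsilon<\frac{1}{4(s-2)}$) enter, together with $\alpha>\frac{s+2}{s-2}$, equivalently $\frac{s-2}{s}\alpha>1+\frac2s$.

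Next, absorb the data-dependent constant using \eqref{t06}. The first two terms of \eqref{t06} give $K^{\frac{s-2}{s}}+K^{\frac{s-2}{2}}\le t_7^{\theta}$ for a small power $\theta$ of order $\frac{\alpha}{s+2}$ or $\frac{2\alpha}{s(s+2)}$. The integral is then at most $s\bar C_3E_v(t_7)t_7^{\alpha-\kappa}$ for some $\kappa>0$, provided
\begin{equation*}
\alpha-1-\tfrac{2}{s}\alpha-\delta-\theta\ge\kappa .
\end{equation*}
The term $(16s\bar C_3)^{2s}$ in \eqref{t06} is designed so that $s\bar C_3t_7^{-\kappa}\le\frac1{16}$ when $\kappa=\frac{1}{2s}$, and we aim for that value.

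The main obstacle is this exponent bookkeeping. With $\alpha$ possibly near $\frac{s+2}{s-2}$ and the extra losses $2\varepsilon$ (from \eqref{ualpha>2}) and $\theta$ (from the size of $\tilde H^*(t_0)$), the margin $\alpha\frac{s-2}{s}-1-\frac{2}{s}$ can be small. This explains why the hypothesis splits into $s>6$ with $\alpha>2$ versus $s\le6$ with $\alpha>\frac{s+2}{s-2}$ strictly. I would first verify the inequality $\frac{s-2}{s}\alpha-1-\theta-\delta\ge\frac{2}{s}\alpha-\alpha+\frac{1}{2s}+\ldots$ separately in the two regimes, using $\alpha>2$ (resp. $\alpha>\frac{s+2}{s-2}$) and the smallness of $\epsilon_1,\varepsilon$.

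If the margin fails at the endpoint, I would instead keep the favourable positive term $\int\|v\|^2\tau^{\eta-1}$, which does not help directly. A better fallback is to choose $t_7$ larger so that the factor $t_7^{\theta}$ is dominated; this is the role of the large exponents $\frac{s+2}{\alpha}$ and $\frac{s(s+2)}{2\alpha}$ in \eqref{t06}.
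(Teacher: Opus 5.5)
Your outline is the paper's proof step for step. You drop the nonnegative $\|v\|^2$-integral in \eqref{tvare1} with $T=t_7$, bound $\|Q\nabla F(u)\|$ via (H3) and \eqref{ualpha>2}, insert \eqref{vv} and \eqref{lambdav+tv'}, integrate, and absorb the data through \eqref{t06}. Your justification of $E_v(t_7)\neq0$ by uniqueness for the $v$-equation (Gronwall with (H3)(2)) even fills in something the paper leaves implicit.

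However, the one step you postpone does not close as you have set it up. You keep the factor $K^{\frac{s-2}{s}}+K^{\frac{s-2}{2}}$, but \eqref{t06} only yields $K\lesssim t_7^{\frac{\alpha}{s+2}}$. Hence $K^{\frac{s-2}{2}}\lesssim t_7^{\frac{(s-2)\alpha}{2(s+2)}}$, so this $\theta$ is not small. The resulting $t_7$-exponent is $1+\frac{2}{s}\alpha+\frac{(s-2)\alpha}{2(s+2)}+\frac{2(s-2)}{s}\varepsilon$. It lies below $\alpha$ only when $\alpha>\frac{2s(s+2)}{(s-2)(s+4)}$, and for every $s>4$ this threshold exceeds $\max\{2,\frac{s+2}{s-2}\}$. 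For example, with $s=10$ the estimate fails for $\alpha\in(2,2.14]$. Your fallback does not rescue this as phrased: once $K$ has been replaced by $t_7^{\theta}$, enlarging $t_7$ gains nothing.

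The missing observation, used implicitly in the paper's \eqref{QF1'}, is the following. For $t\ge t_7$, set $X(t):=Kt^{-\alpha+2\varepsilon}$; then $X(t)\lesssim t^{\frac{\alpha}{s+2}-\alpha+2\varepsilon}\le 1$. Hence $\|A^{1/2}u\|^{s-2}\le X^{\frac{s-2}{2}}\le X^{\frac{s-2}{s}}$, and only $K^{\frac{s-2}{s}}\lesssim t_7^{\frac{(s-2)\alpha}{s(s+2)}}$ survives.

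With that, the bookkeeping becomes explicit:
\begin{itemize}
\item The bound $\epsilon_1\le\frac{s-2}{2s^2+3s+2}$ gives $\frac{2\epsilon_1}{1+\epsilon_1}\le\frac{s-2}{s(s+2)}$. Together with $\frac{2(s-2)}{s}\varepsilon<\frac{1}{2s}$, this makes the $\tau$-exponent $q$ satisfy $q+1<-\frac{1}{2s}$. The binding case is $\alpha$ near $\frac{s+2}{s-2}$.
\item The $t_7$-exponent becomes $1+\frac{3s+2}{s(s+2)}\alpha+\frac{2(s-2)}{s}\varepsilon$, which is at most $\alpha-\frac{1}{2s}$ by \eqref{1/s}.
\item The term $(16s\bar C_3)^{2s}$ in \eqref{t06} then bounds the integral by $\frac18 t_7^{\alpha}E_v(t_7)$.
\end{itemize}
Alternatively, keep $K$ as a fixed data-dependent constant and enlarge $t_7$. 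You can then transfer the conclusion back to the original $t_7$, with a smaller $c$, using the monotonicity of $E$.

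Finally, the $s>6$ versus $s\le 6$ split is not caused by these margins, which hold uniformly for $\alpha\ge\frac{s+2}{s-2}$. It simply encodes $\alpha>\max\{2,\frac{s+2}{s-2}\}$: the part of the range not covered by Theorem \ref{thm3}, where \eqref{ualpha<2} is available. Note also that dropping the $\|v\|^2$-term requires $\alpha\ge 2$.
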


\begin{proof}
Let $\eta=\alpha-2$ and $\lambda=\frac{\alpha}{2}$ as in Theorem \ref{thm3}. It follows from \eqref{tvare1} with $T=t_7$ and $\alpha\ge2$ that
\begin{align*}
t^{\eta}E_{\lambda,v}(t)
\ge &t^{\eta}_{7}E_{\lambda,v}(t_7)-\int^{t}_{t_7}\left\langle\lambda v(\tau)+\tau v'(\tau), \tau^{\eta+1} Q\nabla F(u(\tau))\right\rangle d \tau\\
\ge &t^{\eta}_{7}E_{\lambda,v}(t_7)-\int^{t}_{t_7}\tau^{\eta+1}\|\lambda v(\tau)+\tau v'(\tau)\|\|Q\nabla F(u(\tau))\|d\tau.
\end{align*}
From Lemma \ref{lem8}, we find that \eqref{vv} and \eqref{lambdav+tv'} still hold in this case.

Then, using (H3)(2), \eqref{ualpha>2} and \eqref{vv} we have, for any $t\ge t_7$,
\begin{equation}\label{QF1'}
\begin{aligned}
&\|Q\nabla F(u(t))\|\\
\leq& R\left(M^{\frac{s-2}{s}}_{0}\left(\|A^{1/2}u\|^{2}+F(u)\right)^{\frac{s-2}{s}}+\|A^{1 / 2} u(t)\|^{s-2}\right)\|A^{1 / 2} v(t)\|\\
\leq& R\left(M^{\frac{s-2}{s}}_{0}+1\right)\left[\tilde H^*(t_0)t^{-\alpha+2\varepsilon}+C_\varepsilon t^{\alpha-\frac{2s}{s-2}-2\varepsilon}_{0} t^{-\alpha+2\varepsilon}\right]^{\frac{s-2}{s}}\|A^{1 / 2} v(t)\|\\
\leq& R\left(M^{\frac{s-2}{s}}_{0}+1\right)\left[\tilde H^*(t_0)+C_\varepsilon t^{\alpha-\frac{2s}{s-2}-2\varepsilon}_{0} \right]^{\frac{s-2}{s}}t^{(-\alpha+2\varepsilon)\cdot\frac{s-2}{s}}\|A^{1 / 2} v(t)\|.
\end{aligned}
\end{equation}
Combining \eqref{vare} and \eqref{t06}, we deduce
$$
\tilde H^*(t_0)\leq\frac{1}{2}t^{\frac{\alpha}{s+2}}_{7},
$$
and
$$
C_\varepsilon t^{\alpha-\frac{2s}{s-2}-2\varepsilon}_{0}\leq\frac{1}{2}t^{\frac{2\alpha}{s(s+2)}}_{7},
$$
which indicates that \eqref{QF1'} becomes
\begin{equation}\label{QF2}
\begin{aligned}
\|Q\nabla F(u(t))\|\leq& R\left(M^{\frac{s-2}{s}}_{0}+1\right)\left[\frac{1}{2}t^{\frac{\alpha}{s+2}}_{7}+\frac{1}{2}t^{\frac{2\alpha}{s(s+2)}}_{7}
\right]^{\frac{s-2}{s}}\|A^{1 / 2} v(t)\|\\
\leq & \bar{C}_1t^{\frac{s-2}{s(s+2)}\alpha}_{7}t^{\left(-\alpha+2\varepsilon\right)\cdot\frac{s-2}{s}}\|A^{1 / 2} v(t)\|,
\end{aligned}
\end{equation}
by $s>2$, \eqref{C1} and \eqref{t06}. Using the inequality and \eqref{lambdav+tv'}, we get
\begin{align*}
&\int^{t}_{t_7}\tau^{\eta+1}\|\lambda v(\tau)+\tau v'(\tau)\|\|Q\nabla F(u(\tau))\|d\tau\\
\leq& \bar C_3E_v(t_7)t^{\frac{1-\epsilon_1}{1+\epsilon_1}\alpha+\frac{s-2}{s(s+2)}\alpha}_{7}
\int^{t}_{t_7}\tau^{\frac{2\epsilon_1}{1+\epsilon_1}\alpha-\frac{s-2}{s}(\alpha-2\varepsilon)}d\tau\\
\leq& 2s\bar C_3E_v(t_7)t^{1+\frac{2}{s}\alpha+2\varepsilon\frac{s-2}{s}}_{7},
\end{align*}
due to
$$
\frac{2\epsilon_1}{1+\epsilon_1}\alpha-\frac{s-2}{s}(\alpha-2\varepsilon)+1<-\frac{1}{2s},
$$
by \eqref{vare} and \eqref{epsilon10}.
As a result, using \eqref{vare} again and \eqref{1/s}, we obtain
\begin{align*}
\int^{t}_{t_7}\tau^{\eta+1}\|\lambda v(\tau)+\tau v'(\tau)\|\|Q\nabla F(u(\tau))\|d\tau
\leq 2s\bar C_3E_v(t_7)t_7^{\alpha-\frac{1}{2s}}.
\end{align*}
Then, proceeding similarly as in the case when $\alpha\le2$, we verify the conclusion. 
\end{proof}

\subsection{Remarks and examples}

To sum up, as a consequence of Theorems \ref{thm3} and \ref{thm4}, the allowable value range of $\alpha$
 is exactly $\left[\frac{s+2}{s-2},~\frac{2s}{s-2}\right]$.
Hence, the optimal energy decay rate is $t^{-\alpha}$ for these $\alpha$ (except for $\alpha=\frac{2s}{s-2}$) in view of \eqref{nA1}, which is determined by the range component $v$ of the solution $u$. Also we recall that the case of $\alpha>2s/(s-2)$ has been treated in \cite{LUO3}, in which the optimal energy decay rate is determined by the kernel component $w$.

When $F$ is absent, \eqref{1.1} becomes a linear equation. In addition, \eqref{wt} and \eqref{vt} become
\begin{eqnarray}\label{wt1}
\left\{\begin{array}{ll}
w''(t)+\gamma(t)w'(t)=0, \quad \forall t\geq t_0,\\[0.36cm]
w(t_0)=P u_{0}, ~w'(t_0)=P u_{1},
\end{array} \right.
\end{eqnarray}
 and
\begin{eqnarray}\label{vt1}
\left\{\begin{array}{ll}
v''(t)+A v(t)+\gamma(t)v'(t)=0, \quad \forall t\geq t_0,\\[0.36cm]
v(t_0)=Q u_{0}, ~v'(t_0)=Q u_{1},
\end{array} \right.
\end{eqnarray}
with $u=v+w=Qu+Pu$. Hence
\begin{align*}
E(u(t))&=\frac{1}{2}\|u'(t)\|^2+\frac{1}{2}\|A^{1/2}u(t)\|^2\\
&=\frac{1}{2}\|v'(t)\|^2+\frac{1}{2}\|A^{1/2}v(t)\|^2+\frac{1}{2}\|w'(t)\|^2\\
&=E_v(t)+E_w(t).
\end{align*}
The following result has no restriction of $\alpha$, even if $A$ is not coercive.
\begin{corollary}\label{cor}
Suppose that Assumptions (H1), (H2)(1), \eqref{nu3} and \eqref{gamma} hold. Let $(u_0, u_1)\in \mathcal{H}_1\times \mathcal{H}$, and $u(t)$ be the solution of \eqref{1.1} with $F\equiv 0$. Then the energy of $u(t)$ satisfies
$$
E(t)\leq CE(t_0)t^{-\alpha},\quad \forall t\ge t_{3,\varepsilon}, ~~ \mbox{with some positive constant}~C,
$$
where $t_{3,\varepsilon}$ is defined in \eqref{t03} satisfying \eqref{t01} and \eqref{t02}.
Moreover, for every $(u_0,u_1)\in \mathcal{H}_{1} \times \mathcal{H}\setminus\{(0,0)\}$, we have
$$
E(t)\ge cE(t_0)t^{-\alpha},\quad \forall t\ge t_{3,\varepsilon}, ~~ \mbox{with some positive constant}~c.
$$
\end{corollary}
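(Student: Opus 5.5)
The plan is to use the orthogonal decomposition $u=w+v$ with $w=Pu$, $v=Qu$. For $F\equiv0$ the two components decouple into \eqref{wt1} and \eqref{vt1}, and the energy splits as $E(t)=E_v(t)+E_w(t)$, where here $E_v=\frac12\|v'\|^2+\frac12\|A^{1/2}v\|^2$ and $E_w=\frac12\|w'\|^2$. Each piece is handled separately.

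\textbf{The kernel part.} Equation \eqref{wt1} is an ODE in $\ker A$ that can be solved explicitly. From $(t^{\alpha}w'(t))'=0$ we get $w'(t)=t_0^{\alpha}t^{-\alpha}Pu_1$, hence
$$E_w(t)=t_0^{2\alpha}E_w(t_0)\,t^{-2\alpha}.$$
Since $t\ge t_0$, this is bounded by $t_0^{\alpha}E_w(t_0)\,t^{-\alpha}$, which is an upper bound of the required form.

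\textbf{The range part.} The space $(\ker A)^{\perp}$ is invariant under $A$. By \eqref{nu3}, the restriction of $A$ to this space is coercive with constant $\nu$, as recorded in \eqref{vnu}. So $v$ solves problem \eqref{1.1} with $F=0$ in the Hilbert space $(\ker A)^{\perp}$, where \eqref{nu} holds. Lemmas \ref{lem1}--\ref{lem3}, Theorem \ref{thm1} and the $F=0$ part of the proof of Theorem \ref{thm2} therefore apply verbatim to $v$, giving for $t\ge t_{3,\varepsilon}$
$$c_1E_v(t_0)t^{-\alpha}\le E_v(t)\le 8t_{3,\varepsilon}^{\alpha}E_v(t_0)t^{-\alpha}.$$
For the lower bound I will use the explicit chain \eqref{Eut3}, namely $H(t_{3,\varepsilon})\ge\frac12 t_{3,\varepsilon}^{-\alpha}t_0^{2\alpha}E(t_0)$. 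This chain only uses $E'=-\gamma\|v'\|^2\ge-2\gamma E$, so $c_1$ depends only on $t_0$, $t_{3,\varepsilon}$ and $\alpha$. Adding the two upper bounds, and using $E_v(t_0)+E_w(t_0)=E(t_0)$, gives $E(t)\le CE(t_0)t^{-\alpha}$ with $C=8t_{3,\varepsilon}^{\alpha}+t_0^{\alpha}$.

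\textbf{The lower bound and its main obstacle.} We have
$$E(t)\ge E_v(t)\ge c_1E_v(t_0)t^{-\alpha}.$$
Whenever $(Qu_0,Qu_1)\ne(0,0)$, the solution $v$ is nontrivial. Then $E_v(t_0)>0$, because $\|A^{1/2}v(t_0)\|^2\ge\nu\|v(t_0)\|^2$ and $v(t_0)=Qu_0$, $v'(t_0)=Qu_1$. This already gives a lower bound $E(t)\ge c\,t^{-\alpha}$ with a positive, data-dependent $c$.

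The delicate step is converting $E_v(t_0)$ into $E(t_0)$ in the stated form $cE(t_0)$. I would take $c=c_1E_v(t_0)/E(t_0)$, which is positive for such data. The pure-kernel data $(Qu_0,Qu_1)=(0,0)$ must be examined separately, and I expect this to be the main obstacle. There $E(t)=E_w(t)=t_0^{2\alpha}E_w(t_0)t^{-2\alpha}$ exactly. This vanishes when $Pu_1=0$, so the inequality holds trivially with $E(t_0)=0$. When $Pu_1\ne0$, however, it decays strictly faster than $t^{-\alpha}$.

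So for the second assertion I would first verify whether the intended exceptional set is $\ker A\times\ker A$, as in Theorems \ref{thm3} and \ref{thm4}. If so, I would state the lower bound for $(u_0,u_1)\notin\ker A\times\ker A$, and remark that on $\ker A\times\ker A$ the exact rate is $t^{-2\alpha}$. Everything else reduces cleanly to the results already proved in Section 3.
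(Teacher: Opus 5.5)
Your argument matches the paper's proof: you split $u=Pu+Qu$, solve the kernel equation explicitly to get $E_w(t)=t_0^{2\alpha}E_w(t_0)t^{-2\alpha}$, and apply the coercive-case bounds \eqref{4Ht0} and \eqref{Eut3} to $v$ on $(\ker A)^{\perp}$, and your upper bound with $C=8t_{3,\varepsilon}^{\alpha}+t_0^{\alpha}$ is correct. Your concern about the lower bound is justified: the paper's ``immediate'' combination only gives $E(t)\ge c\,E_v(t_0)\,t^{-\alpha}$, and for $u_0\in\ker A$, $u_1\in\ker A\setminus\{0\}$ one has $E(t)=\tfrac12\|u_1\|^2(t_0/t)^{2\alpha}$, so the second assertion as stated is false whenever $\ker A\neq\{0\}$; your corrected version (exceptional set $\ker A\times\ker A$, data-dependent $c=c_1E_v(t_0)/E(t_0)$) is the right statement, and your argument proves it.
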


\begin{proof}
First, from \eqref{wt1}, it is a simple calculation that
$$
w'(t)=w'(t_0)\left(\frac{t_0}{t}\right)^{\alpha},
$$
which infers that
\begin{equation}\label{Ew}
E_{w}(t)=\frac{1}{2}\|w'(t)\|^{2}=\frac{t^{2\alpha}_{0}}{2}\|w'(t_0)\|^{2}t^{-2\alpha}.
\end{equation}
As for $E_{v}(t)$, from \eqref{4Ht0} and \eqref{Eut3}, we have
$$
\frac{t^{-\alpha}_{3,\varepsilon}}{4}t^{2\alpha}_{0}E_v(t_0) t^{-\alpha}\leq E_v(t)\leq 8t^{\alpha}_{3,\varepsilon}E(t_0)t^{-\alpha},\quad\forall t\ge t_{3,\varepsilon}.
$$
Hence, combining the above inequality, $E(t)=E_{w}(t)+E_{v}(t)$ and \eqref{Ew}, we immediately obtain the results.
\end{proof}

Next, we give two examples indicating the applicability of our abstract theorems.

\vspace{4pt}
\noindent
\textbf{Example 1.} We consider the wave equation with Neumann boundary condition:
\begin{eqnarray}\label{6.2}
\left\{\begin{array}{ll}
&u_{tt}(t,x)-\Delta u(t,x)+\gamma(t) u_{t}(t,x)+|u(t,x)|^{p}u(t,x)=0, \quad \mbox{in}~ (0,+\infty)\times\Omega,\\[0.2cm]
&u(0,x)=u_{0}(x),~ u_{t}(0,x)=u_{1}(x), \quad x\in\Omega ,\\[0.36cm]
&\displaystyle\frac{\partial u(t,x)}{\partial \nu}=0, \quad \mbox{on}~ \partial \Omega \times  (0,\infty),
\end{array} \right.
\end{eqnarray}
where $\Omega$ is a bounded domain in $R^{n}$ ($n$ a natural number), with smooth boundary $\partial \Omega$, and $\nu$ the unit outward normal on $\partial\Omega$. The $p$ is a positive exponent satisfying $(n-2)p \leq 2 $.

Take $\mathcal{H}=L^{2}(\Omega)$, and $A=-\Delta_{N}$ (the negative Neumann-Laplacian on $\Omega$), and let
$$F(u)=\frac{1}{p+2}\int_{\Omega}|u(x)|^{p+2}dx,\quad \forall u\in \mathcal{H}_1=H^{1}(\Omega).$$
Then \eqref{6.2} can be rewritten in the form of \eqref{1.1}. It can be verified (see \cite[Section 7]{LUO3} for details) that (H1)-(H3), and \eqref{nu3} hold. Therefore,
Theorem \ref{thm3} and \ref{thm4} are applicable to \eqref{6.2}.

\vspace{4pt}
\noindent
\textbf{Example 2.} Consider the initial-boundary value problem for an integro-differential damped hyperbolic equation
\begin{eqnarray}\label{6.3}
\left\{\begin{array}{ll}
&u_{tt}(t,x)-\Delta u(t,x)-\lambda_1u(t, x)+\gamma(t)u_t\\[0.36cm]
&~~+\left(\displaystyle \int_{\Omega}|u(t,x)|^{2}dx\right)^{p/2}u(t,x)=0,
 ~\mbox{in}~ [0,+\infty)\times\Omega,\\[0.7cm]
&u(0,x)=u_{0}(x),~~ u_{t}(0,x)=u_{1}(x), \quad x\in\Omega ,\\[0.36cm]
& u(t,x)=0, \quad \mbox{on}~\partial \Omega \times  (0,\infty),
\end{array} \right.
\end{eqnarray}
where $\Omega$ is as in Example 1, $p\ge 1,$ and $\lambda_1$ is the first eigenvalue of the negative Dirichlet-Laplacian $-\Delta_D$ on $\Omega$.

Take
$\mathcal{H}=L^{2}(\Omega)$, $A=-\lambda_1I-\Delta_D$, and let
\begin{align}\label{F2}
F(u)=\frac{1}{p+2}\left(\int_{\Omega}|u(x)|^{2}dx\right)^{\frac{p+2}{2}}, \quad u\in \mathcal{H}_1=H_0^{1}(\Omega).
\end{align}
Then (H1)-(H3), and \eqref{nu3} are satisfied (see also \cite[ Section 7]{LUO3}). Thus, Theorems \ref{thm3} and \ref{thm4} are applicable to \eqref{6.3}, as well.

We note that the condition (H3)(2) is not suitable for the case when $A =-\lambda_1I-\Delta_D$, and $F(u)=\frac{1}{p+2}\int_{\Omega}|u(x)|^{p+2}dx$ (as in Example 1).  The problem whether condition (H3)(2) in Theorems \ref{thm3} and \ref{thm4} can be dropped remains open.

\section{Proofs of Auxiliary Lemmas }

\noindent {\bf Proof of Lemma \ref{th2'} .}
For $\alpha\le2$, choose $\eta=\alpha-2, \lambda=\frac{\alpha}{2}$, and so
\begin{equation}\label{xi1}
\xi=\lambda(\lambda+1-\alpha)=\frac{\alpha}{4}(2-\alpha)\ge0.
\end{equation}
We see that \eqref{H'alpha} is still satisfied. This and $\alpha\le2$ together yield that $\tilde H'(t)\leq0$. Thus,
$$
\tilde E_{\lambda}(t)=\tilde H(t)t^{-\eta}\leq \tilde H(t_0) t^{-\eta}.
$$
Combining $\alpha\le2$ and \eqref{xi1} gives
$$
t^{2}F(u(t))+\frac{1}{2}\left\|\lambda u(t)+t u'(t)\right\|^{2}+\frac{t^2}{2}\|A^{1/2}u(t)\|^2\leq\tilde H(t_0) t^{-\eta},
$$
and so \eqref{ualpha<2} holds.

For $\alpha>2$,  choose $\eta=\alpha-2-2\varepsilon$ and $\lambda=\frac{\alpha}{2}+\varepsilon$, where $\varepsilon$ is a constant that can be arbitrarily small, we find that \eqref{H'a>2} still hold. That is
\begin{equation}\label{H'2}
\begin{aligned}
\frac{d}{dt}\tilde{H}^{*}(t)
\leq& t^{\eta}\left[\frac{(2-s)\alpha}{2} a_u(t)+\lambda(\alpha-2-2\varepsilon)(1+2\varepsilon)c_u(t)-4\varepsilon d_u(t)\right]\\
\leq& t^{\eta}\left[\frac{(2-s)\alpha}{2} tF(u(t))+\frac{\alpha+2\varepsilon}{4t}|\alpha-2-2\varepsilon|(1+2\varepsilon)\|u(t)\|^2-2\varepsilon t\|A^{1/2}u(t)\|^2\right].
\end{aligned}
\end{equation}
Using Young's inequality with two exponents $\frac{s}{2}$ and $\frac{s}{s-2}$, we have
$$
t^{-1}\|u(t)\|^2=(t^{-1-\frac{2}{s}})(t^{\frac{2}{s}}\|u(t)\|^{2})\leq \varepsilon_0t\|u(t)\|^s+C_{\varepsilon_0}t^{-\frac{s+2}{s-2}},
$$
where $0<\varepsilon_0\ll \varepsilon$ is a sufficiently small constant such that
\begin{equation}\label{ep0}
\begin{aligned}
\frac{\alpha+2\varepsilon}{4t}|\alpha-2-2\varepsilon|(1+2\varepsilon)\|u(t)\|^2&\leq \alpha|\alpha-2-2\varepsilon|\left(\varepsilon_0t\|u(t)\|^s+C_{\varepsilon_0}t^{-\frac{s+2}{s-2}}\right)\\
&\leq \varepsilon t(F(u)+\|A^{1/2}u(t)\|^2)+C_{\varepsilon_0}t^{-\frac{s+2}{s-2}},
\end{aligned}
\end{equation}
by (H3)(1).
Combining \eqref{ep0}, $\eta=\alpha-2-2\varepsilon$ and \eqref{H'2}, we deduce
\begin{align*}
\frac{d}{dt}\tilde{H}^{*}(t)\leq C_{\varepsilon_0}t^{\eta-\frac{s+2}{s-2}}=C_{\varepsilon_0}t^{\alpha-\frac{3s-2}{s-2}-2\varepsilon},
\end{align*}
and hence
\begin{align*}
\tilde{H}^{*}(t)\leq \tilde{H}^{*}(t_0)+\int^{t}_{t_0}C_{\varepsilon_0}\tau^{\alpha-\frac{3s-2}{s-2}-2\varepsilon}d\tau
\leq \tilde{H}^{*}(t_0)-\frac{C_{\varepsilon_0}}{\alpha-\frac{2s}{s-2}-2\varepsilon}t^{\alpha-\frac{2s}{s-2}-2\varepsilon}_{0},
\end{align*}
by $\alpha\le\frac{2s}{s-2}$. Then, it follows from $\xi<0$ and \eqref{ep0} that
\begin{align*}
&t^{\eta+2}\left(F(u(t))+\frac{1}{2}\|A^{1/2}u(t)\|^2\right)\\
\leq& \tilde{H}^{*}(t_0)+\frac{\lambda|\lambda +1-\alpha|}{2}t^{\eta}\|u(t)\|^2-
\frac{C_{\varepsilon_0}}{\alpha-\frac{2s}{s-2}-2\varepsilon}t^{\alpha-\frac{2s}{s-2}-2\varepsilon}_{0}\\
\leq& \tilde{H}^{*}(t_0)+\varepsilon t^{\alpha-2\varepsilon}(F(u)+\|A^{1/2}u(t)\|^2)+C_{\varepsilon_0}t^{\alpha-\frac{2s}{s-2}-2\varepsilon}_{0},
\end{align*}
due to $\alpha\le\frac{2s}{s-2}$. From \eqref{vare}, for any $t\ge t_0$, we obtain
$$
\frac{1}{2}t^{\eta+2}\left(F(u(t))+\frac{1}{2}\|A^{1/2}u(t)\|^2\right)\leq \tilde{H}^{*}(t_0)+C_{\varepsilon_0}t^{\alpha-\frac{2s}{s-2}-2\varepsilon}_{0}.
$$
This and $\eta=\alpha-2-2\varepsilon$ justify \eqref{ualpha>2}. Thus we complete the proof.

\noindent {\bf Proof of Lemma \ref{lem8}.}
We consider the energies
$$E_v(t)=\frac{1}{2}\|v'(t)\|^2+\frac{1}{2}\|A^{1/2}v(t)\|^2,~~
\hat{E}_v(t)=E_v(t)+\gamma(t)\langle v(t),v'(t)\rangle.$$
From \eqref{vnu} and \eqref{t04}, it follows that
\begin{align*}
\gamma(t)\langle v(t),v'(t)\rangle\leq\gamma(t)\frac{\|v'(t)\|}{\sqrt{\nu}}\cdot\sqrt{\nu}\|v(t)\|
\leq\frac{\epsilon_1}{2}\left(\|v'(t)\|^2+\|A^{1/2}v(t)\|^2\right),
\end{align*}
which implies
\begin{equation}\label{EvhatE}
(1-\epsilon_1)E_v(t)\leq\hat{E}_v(t)\leq(1+\epsilon_1)E_v(t).
\end{equation}
Also,
\begin{align*}
\hat{E}'_v(t)=&-\gamma(t)\|v'(t)\|^2-\gamma(t)\|A^{1/2}v(t)\|^2-\gamma^2(t)\langle v(t),v'(t)\rangle\\
&+\gamma'(t)\langle v(t),v'(t)\rangle-2\langle Q\nabla F(u),v'(t)\rangle-\gamma(t)\langle Q\nabla F(u),v(t)\rangle\\
=&k_1+k_2+k_3+k_4+k_5+k_6.
\end{align*}
Let us estimate separately the sum $k_{3}+k_{4}$ and the last two terms. First, \eqref{gamma} implies
$$
|\gamma'(t)|=\frac{1}{\alpha}\gamma^2(t)\leq\gamma^2(t),
$$
by $\alpha\ge \frac{s+2}{s-2}>1.$ Then, from \eqref{t04}, we obtain
\begin{equation}\label{k34}
\begin{aligned}
k_3+k_4\leq&\gamma^2(t)\| v(t)\|\cdot\|v'(t)\|+|\gamma'(t)|\| v(t)\|\cdot\|v'(t)\|\\
\leq&\frac{\epsilon_1}{2}\gamma(t)\|A^{1/2}v(t)\|^2+\frac{\epsilon_1}{2}\gamma(t)\|v'(t)\|^2.
\end{aligned}
\end{equation}

Next, we consider separately two cases.

\vspace{0.3cm}

\noindent{\bf Case I}: {\it $\frac{s+2}{s-2}\le\alpha<2$}.
In this case, \eqref{QF1} is satisfied. Then, using \eqref{t04} and $\alpha\ge\frac{s+2}{s-2}$, we have
\begin{equation}\label{QFeps4'}
\begin{aligned}
\|Q\nabla F(u)\| \leq \bar C_1
\left(\left[H^*(t_0)\right]^{\frac{s-2}{s}}+\left[H^*(t_0)\right]^{\frac{s-2}{2}}\right)t^{-\frac{s-2}{s}\alpha}\|A^{1 / 2} v(t)\|
\leq\frac{\epsilon_1}{4}\gamma(t)\|A^{1 / 2} v(t)\|.
\end{aligned}
\end{equation}

\vspace{0.3cm}

\noindent{\bf Case II}: {\it $\alpha\ge \frac{s+2}{s-2}>2$ or $\alpha\ge2> \frac{s+2}{s-2}$}.
In this case, \eqref{QF2} is satisfied. Also, using \eqref{vare}, \eqref{t04} and $\alpha\ge\frac{s+2}{s-2}$, we have
\begin{equation}\label{QFeps4''}
\begin{aligned}
\|Q\nabla F(u)\| \leq \bar C_1 \left[\tilde H^*(t_0)+C_\varepsilon t^{\alpha-\frac{2s}{s-2}-2\varepsilon}_{0} \right]^{\frac{s-2}{s}}t^{\left(-\alpha+2\varepsilon\right)\cdot\frac{s-2}{s}}\|A^{1 / 2} v(t)\|
\leq\frac{\epsilon_1}{4}\gamma(t)\|A^{1 / 2} v(t)\|.
\end{aligned}
\end{equation}
According to \eqref{QFeps4'} and \eqref{QFeps4''}, we infer
\begin{equation}\label{k5}
\begin{aligned}
2\langle Q\nabla F(u),v'(t)\rangle
\leq\frac{\epsilon_1}{4}\gamma(t)\|A^{1/2}v(t)\|^2+\frac{\epsilon_1}{2}\gamma(t)\|v'(t)\|^2,
\end{aligned}
\end{equation}
and
\begin{align*}
\langle Q\nabla F(u),v(t)\rangle\leq \frac{\|Q\nabla F(u)\|}{\sqrt{\nu}}\cdot\sqrt{\nu}\|v(t)\|\leq\frac{\epsilon_1}{4}\gamma(t)\|A^{1/2}v(t)\|^2.
\end{align*}
Combining this, \eqref{k34} and \eqref{k5} we obtain, for any $t\ge t_5$,
$$
\hat{E}'_v(t)\leq-(1-\epsilon_1)\gamma(t)(\|v'(t)\|^2+\|A^{1/2}v(t)\|^2)\leq-\frac{1-\epsilon_1}{1+\epsilon_1}\gamma(t)\hat{E}_v(t),
$$
from which it follows that
$$
\hat{E}_v(t)\leq \hat{E}_v(t_5)\left(\frac{t}{t_5}\right)^{-\frac{1-\epsilon_1}{1+\epsilon_1}\alpha}.
$$
Therefore, using \eqref{vnu} and \eqref{EvhatE} yields
\begin{align*}
\|v(t)\|^2+\|v'(t)\|^2+\|A^{1/2}v(t)\|^2\leq \frac{1+\epsilon_1}{1-\epsilon_1}\cdot\frac{2\nu+2}{\nu}E_v(t_5)t^{\frac{1-\epsilon_1}{1+\epsilon_1}\alpha}_{5}
t^{-\frac{1-\epsilon_1}{1+\epsilon_1}\alpha}.
\end{align*}

\vspace{0.5cm}

{\bf Contributions}

The authors contributed equally and significantly in writing this paper.

{\bf Data Availability}

Data sharing is not applicable to this article as no datasets were generated or analyzed during the current study.

{\bf Conflict of interest}

The authors have no competing interests to declare.

{\bf Ethical Statement}

There are no ethical concerns applicable to our research.

\end{document}